\documentclass[12pt, reqno]{amsart}
\usepackage{amssymb,a4}
\usepackage{amsmath,amsfonts,amssymb,amsthm}
\usepackage{mathrsfs}
\textwidth 16cm
\oddsidemargin -0.0cm
\evensidemargin -0.0cm
\topmargin -1.3cm
\textheight 22cm
\parskip 2mm

\usepackage{color}
\usepackage{colordvi}

\newcommand{\on}{\operatorname}
\newcommand{\cal}{\mathcal}
\newcommand{\f}{\mathfrak}

\newcommand{\al}{\alpha}

\def\wt{{\rm wt}}

\def\de{\delta}
\def\be{\beta}

 \topmargin 0in

\def\C{{\mathbb C}}

\def\Z{{\mathbb Z}}

\def\N{{\mathbb N}}
\def\1{{\bf 1}}
\def\l{\lambda}

\def \End{{\rm End}}

\def \<{\langle}
\def \>{\rangle}
\def \w{\omega}
\def \fg{\frak{g}}
\def \n{\frak{n}}
\def \wg{{\widehat{\frak{g}}}}
\def \wh{{\widehat{\frak{h}}}}

\def \sl{\frak{sl}}
\def \mw{\mathcal{W}}

\def \wD{\widehat{\Delta}}
\def \Cl{\mathcal  Cl}
\def \wW{\widetilde{W}}
\def \h{\mathfrak{h}}
\def \l{\lambda}
\def \w{\omega}
\def \F{\mathcal F}
\def \ww {\widetilde{\omega}}
\def \W{\mathcal{W}}
\def \wv{\widetilde{V}}
\numberwithin{equation}{section}

\newtheorem{theorem}{Theorem}[section]
\newtheorem{prop}[theorem]{Proposition}
\newtheorem{lem}[theorem]{Lemma}

\newtheorem{remark}[theorem]{Remark}
\newtheorem{conjecture}[theorem]{Conjecture}
\theoremstyle{definition}
\newtheorem{defn}[theorem]{Definition}

  \begin{document}
\title[Coset Vertex Operator Algebras and $\W$-Algebras]{Coset vertex operator algebras and $\W$-algebras of $A$-type}

\author{Tomoyuki Arakawa}\thanks{Arakawa is supported by JSPS KAKENHI Grants (25287004 and 26610006) }
\address[Arakawa]{ Research Institute for Mathematical Sciences, Kyoto University, Kyoto, 606-8502, Japan}
\email{arakawa@kurims.kyoto-u.ac.jp}

\author{ Cuipo  Jiang} \thanks{Jiang is supported by CNSF grants (11371245 and  11531004)}
\address[Jiang]{School of Mathematical Sciences,  Shanghai Jiao Tong University, Shanghai, 200240, China }
\email{cpjiang@sjtu.edu.cn}

\begin{abstract}We give an explicit   description for the weight three generator of the coset vertex operator algebra $C_{L_{\widehat{\sl_{n}}}(l,0)\otimes L_{\widehat{\sl_{n}}}(1,0)}(L_{\widehat{\sl_{n}}}(l+1,0))$, for $n\geq 2, l\geq 1$. Furthermore, we prove that  the commutant $C_{L_{\widehat{\sl_{3}}}(l,0)\otimes L_{\widehat{\sl_{3}}}(1,0)}(L_{\widehat{\sl_{3}}}(l+1,0))$ is isomorphic to the $\W$-algebra $\W_{-3+\frac{l+3}{l+4}}(\sl_3)$, which confirms   the conjecture for the $\sl_3$ case
that $C_{L_{\widehat{\frak g}}(l,0)\otimes L_{\widehat{\frak g}}(1,0)}(L_{\widehat{\frak g}}(l+1,0))$ is isomorphic to
$\W_{-h+\frac{l+h}{l+h+1}}(\frak g)$ for  simply-laced Lie algebras ${\frak g}$ with its Coxeter number $h$  for a  positive integer $l$.
 \end{abstract}
\subjclass[2010]{17B69}

\maketitle
\section{Introduction}
\subsection{}  Given a vertex operator algebra $V$ and a vertex operator subalgebra $U\subseteq V$, $C_V(U)$ which  is called the commutant of $U$ in $V$ or coset construction, is the subalgebra of $V$ which commutes with $U$. The coset vertex algebra construction, initiated in [GKO], was introduced by Frenkel and Zhu in [FZ]. Coset construction is one of the major ways to construct new vertex operator algebras from given ones.  It is widely believed that the commutant $C_V(U)$ inherits properties from $V$ and $U$ in many ways. In particular it is expected that $C_V(U)$ is rational if both $V$ and $U$ are rational. No general  results of this kind have been  known so far.  Nevertheless, many interesting examples, especially coset vertex operator algebras related to affine vertex operator algebras,  have been  extensively studied both in the physics and mathematics literature  \cite{AP}, \cite{ALY1}, \cite{ALY2}, \cite{BEHHH}, \cite{BFH},  \cite{ChL}, \cite{CL}, \cite{DJX}, \cite{DLY}, \cite{DLWY}, \cite{DW1}, \cite{DW2}, \cite{GQ}, \cite{JL1}, \cite{JL2}, \cite{LS}, \cite{LY}, etc.

Let $\wg$  and $\wh$ be the  affine Kac-Moody Lie algebras associated to a complex simple Lie algebra $\fg$ and its Cartan subalgebra $\h$, respectively. For $k\in \C$ such that $k\neq -h^{\vee}$, where $h^{\vee}$ is the dual Coxter number of $\fg$, let $L_{\wg}(k,0)$ and $L_{\wh}(k,0)$ be the associated simple vertex operator algebras with level $k$, respectively \cite{FZ}, \cite{LL}.  The commutant of the Heisenberg vertex operator algebra $L_{\wh}(k,0)$ in $L_{\wg}(k,0)$, denoted by $ K(\fg, k)$, is the so called  parafermion vertex operator algebra \cite{ZF}. Parafermion vertex operator algebras have been studied extensively \cite{ALY1}-\cite{ALY2}, \cite{CGT}, \cite{DLWY}, \cite{DLY},   \cite{DW1}-\cite{DW2}, \cite{JL1}, \cite{LY},  etc. Among other things,  for a positive integer $k$,  the $C_2$-cofiniteness of $K(\fg,k)$   was established in \cite{ALY1} and \cite{DW2}, the generators of $K(\fg,k)$ were given in \cite{DW1}, and the rationalities of $K(\sl_2,k)$ and $K(\sl_k,2)$ were established in \cite{ALY2} and \cite{JL1}-\cite{JL2}, respectively.

Let $L_{\wg}(1,0)$ be the simple affine vertex operator algebra  associated to the simple Lie algebra $\fg$ with level $1$. For $l\in\Z_{\geq 2}$, $L_{\wg}(1,0)^{\otimes l}$ has a tensor product
vertex operator algebra structure with $L_{\wg}(l,0)$ being a vertex operator subalgebra.  The commutant
 of $L_{\wg}(l,0)$ in $L_{\wg}(1,0)^{\otimes l}$ is  a simple
vertex operator subalgebra of $L_{\wg}(1,0)^{\otimes l}$. It was proved in \cite{JL2} and in \cite{L} independently that $C_{L_{\widehat{\frak{sl}_n}}(1,0)^{\otimes l}}(L_{\widehat{\frak{sl}_n}}(l,0)) \cong
 K(\sl_l,n)$ as vertex operator algebras, presenting a version of level-rank duality.
 The classification of irreducible modules and the rationality of $C_{L_{\widehat{\frak{sl}_2}}(1,0)^{\otimes l}}(L_{\widehat{\frak{sl}_2}}(l,0))$ were established in \cite{JL1}. Then by the level-rank duality, the parafermion vertex operator algebra $K(\sl_l,2)$ is rational for any $l\in\Z_{\geq 2}$.

 More generally, given a sequence of positive integers
$\underline{\ell}=(l_1, \cdots l_s)$, the tensor product vertex operator algebra
$L_{\wg}(\underline{\ell}, 0)= L_{\wg}(l_1, 0)\otimes L_{\wg}(l_2, 0)\otimes
\cdots \otimes L_{\wg}(l_s,0)$
contains a vertex operator subalgebra isomorphic to
$ L_\wg(|\underline{\ell}|,0)$ with $|\underline{\ell}|=l_1+\cdots+l_s$.  On the other hand, the sequence
$\underline{\ell}$ defines a Levi subalgebra $\frak{l}_{\underline{\ell}}$ of
$\frak{sl}_{|\underline{\ell}|}$. Denote by $L_{\widehat{\frak{l}_{\underline{\ell}}}}(n,0)$ the vertex operator subalgebra of
$ L_{\widehat{\frak{sl}_{|\underline{\ell}|}}}(n, 0)$ generated by
$\frak{l}_{\underline{\ell}}$.
Set $K(\frak{sl}_{|\underline{\ell}|}, \frak{l}_{\underline{\ell}}, n)=
C_{L_{\widehat{\frak{sl}_{|\underline{\ell}|}}}(n, 0)}(L_{\widehat{\frak{l}_{\underline{\ell}}}}(n, 0)).$
It was established in \cite{JL2} that $C_{L_{\widehat{\frak{sl}_n}}(\underline{\ell}, 0)}(L_{\widehat{\frak{sl}_n}}(|\underline{\ell}|, 0))\cong K(\frak{sl}_{|\underline{\ell}|}, \frak{l}_{\underline{\ell}}, n)$,  which is a more general version of rank-level duality. In particular, we have
 $C_{L_{\widehat{\f{sl}_n}}(l,0)\otimes  L_{\widehat{\f{sl}_n}}(1,0)}( L_{\widehat{\f{sl}_n}}(l+1,0))\cong
 C_{L_{\widehat{\sl_{l+1}}}(n,0)}(L_{\widehat{\sl_{l}}}(n,0)\otimes L_{\widehat{\frak{h_l}}}(n,0))$.

(The principal) $\W$-algebras are one-parameter families of vertex algebras associated to simple Lie algebras. First example of a $\W$-algebra was introduced by Zamolodchikov  \cite{Za} in an attempt  to classify extended conformal algebras with two generating fields. Since then there have been several approaches to the construction of $\W$-algebras \cite{BFKNNRV},  \cite{FL}, \cite{FZa}, \cite{Lu}, \cite{LF}, \cite{BG}, \cite{BBSS2}, \cite{Bou}, \cite{BP}, \cite{Mi}, \cite{BBSS1}, \cite{FFr3}, \cite{FFr4}.

   For $k\in\C$, let $V_{\wg}(k,0)$ be the universal affine vertex operator algebra associated to the simple Lie algebra $\fg$ with level $k$. The associated (principal) affine  $\W$-algebra $\W^k(\fg)$, which is a vertex algebra, can be realized as the cohomology of the BRST complex of the quantum Drinfeld-Sokolov reduction \cite{FFr3}, \cite{FFr4}. From this point of view, $\W$-algebras have been studied  deeply and extensively, and many remarkable results have been achieved \cite{BO}, \cite{FFr1}-\cite{FFr4}, \cite{Fe}, \cite{F}, \cite{KWa1}-\cite{KWa4}, \cite{FKW}, \cite{FBZ}, \cite{Ar1}-\cite{Ar4}, \cite{AM}, etc.  Among other things, it was established in \cite{Ar2} that the character of each irreducible highest weight representation of $\W^k(\fg)$ is completely determined by that of the corresponding irreducible highest weight representations of the affine Lie algebra $\wg$. The $C_2$-cofiniteness  and the rationality of  the minimal series principal $\W$-algebras were established in \cite{Ar4} and \cite{Ar5}, respectively.

    Recall that \cite{FKW} a rational number $k$ with the denominator $u\in \N$ is called  principal admissible if
   $$
   u(k+h^{\vee})\geq h^{\vee}, \ \ (u,r^{\vee})=1,
   $$
   where $h$ is the Coxeter number of $\fg$, $h^{\vee}$ is the dual Coxeter number of $\fg$, and  $r^{\vee}$ is the maximal number of the edges in the Dykin diagram of $\fg$.
   A principal admissible number $k$ is called non-degenerate
   if the denominator $u$ is equal or greater than the Coxeter number $h$.
    For a non-degenerate admissible principal number $k$,
    denote by $\W_k(\fg)$ the simple quotient of $\W^k(\fg)$, which are also called minimal series principal $\W$-algebras \cite{FKW}, \cite{Ar4}, \cite{Ar5}.

   One conjecture \cite{FKW}, \cite{BS}  about minimal series principal $\W$-algebras asserts that  $\W_k(\fg)$ is isomorphic to the commutant $C_{L_{\wg}(p,0)\otimes L_{\wg}(1,0)}(L_{\wg}(p+1,0))$ of
$L_{\wg}(p+1,0)$ in $L_{\wg}(p,0)\otimes L_{\wg}(1,0)$ for  $\fg$ being simply-laced and $k=-h^{\vee}+\frac{p+h^{\vee}}{p+h^{\vee}+1}$, $p\in\Z_{+}$. This conjecture comes partially from the fact that $\W_k(\fg)$ and $C_{L_{\wg}(p,0)\otimes L_{\wg}(1,0)}(L_{\wg}(p+1,0)$ share the same normalized characters \cite{KWa2}, \cite{FKW}.
If $\fg=\sl_2(\C)$, then  $\W_k(\fg)$ is the simple Virasoro vertex operator algebra $L(c_p,0)$, where $c(k)=1-\frac{1}{(p+2)(p+3)}$. For this case, the conjecture follows from the Goddard-Kent-Olive construction \cite{GKO}. If $\fg=\sl_n(\C) (n\geq 2)$ and $p=1$, it was proved in \cite{ALY2} that
$\W_k(\fg)$ is isomorphic to the parafermion vertex operator algebra $K(\sl_2,n)$, with which the rationality of $K(\sl_2,n)$ is established. Then the conjecture in this case follows from the fact that $K(\sl_2,n)\cong C_{L_{\widehat{\frak{sl}_{n}}}(1,0)^{\otimes 2}}(L_{\widehat{\sl_{n}}}(2,0))$ \cite{L}, \cite{JL2}. For general $l\in\Z_{\geq 1}$  and $n\in\Z_{\geq 2}$, by the level-rank duality and the reciprocity law given in \cite{JL2},
$C_{L_{\widehat{\f{sl}_n}}(l,0)\otimes  L_{\widehat{\f{sl}_n}}(1,0)}( L_{\widehat{\f{sl}}_n}(l+1,0))\cong
 C_{K(\f{sl}_{l+1}, n)}(K(\f{sl}_{l}, n)).$ In this paper, we first give explicitly a generator of $C_{K(\f{sl}_{l+1}, n)}(K(\f{sl}_{l}, n))$ of weight three. Then we prove that the conjecture is true for the case that $\fg=\sl_3(\C)$.

 The paper is organized as follows. In Section 2, we briefly review some basics on vertex operator algebras. In Section 3, we recall some notations and facts about  principal affine $\W$-algebras. In Section 4  we review  parafermion vertex operator algebras and the level-rank duality for tensor power decompositions of rational vertex operator algebras of type $A$. In Section 5, we study the coset vertex operator algebra  $C_{K(\f{sl}_{l+1}, n)}(K(\f{sl}_{l}, n))$. The main results of this paper are stated in this part.

\section{Preliminaries}
In this section, we recall some notations and basic facts on vertex operator algebras  \cite{B},
\cite{FLM}, \cite{LL}, \cite{Z}, \cite{DLM1}.

\begin{defn}
A vertex operator algebra $V=(V,Y,{\bf 1},\omega)$ is defined as
follows

(1) $V=\bigoplus_{n\in{\Z}}V_n$ is a $\Z$-graded vector space over $\C$
  equipped with a linear map $Y$:
$$\ \begin{array}{l}
V\to (\End\,V)[[z,z^{-1}]]\\
v\mapsto\displaystyle{ Y(v,z)=\sum_{n\in Z}v_nz^{-n-1}\ \ \ (v_n\in
\End\,V)},
\end{array}$$
such that $\dim V_n<\infty,$ $V_m=0$ if $m<<0$, and for any $u,v\in V$, $v_nu=0$ for sufficiently large $n$.

(2) There exist two distinguished two vectors: the vacuum ${\bf 1}\in V_0$, and the conformal vector $\omega\in V_2$ such that
 $$Y({\bf 1},z)=id, \ \ {\rm
lim}_{x\rightarrow 0}Y(u,x){\bf 1}=u,$$
$$Y(\omega,z)=\sum_{n\in \Z}L(n)z^{-n-2},$$
$$[L(m),L(n)]=(m-n)L(m+n)+\frac{m^3-m}{12}\delta_{m+n,0}c.$$
The complex number $c$ is called the central charge of $V$. Moreover,  $L(0)=n$ on $V_n$, and
for $u\in V$,
$$[L(-1),Y(u,z)]=\frac{d}{dz}Y(u,z).$$

(3) For any $u,v\in V$ there exists $n\geq 0$ such that
$$(z_1-z_2)^nY(u,z_1)Y(v,z_2)=(z_1-z_2)^nY(v,z_2)Y(u,z_1).$$
\end{defn}

\begin{defn} Let $(V, {\bf 1}, \omega, Y)$ be a vertex operator algebra. A weak $V$-module is a vector space $M$ equipped
with a linear map
$$
\begin{array}{ll}
Y_M: & V \rightarrow {\rm End}(M)[[z,z^{-1}]]\\
 & v \mapsto Y_M(v,z)=\sum_{n \in \Z}v_n z^{-n-1},\ \ v_n \in {\rm End}M
\end{array}
$$
satisfying the following:

(1) $v_nw=0$ for $n>>0$ where $v \in V$ and $w \in M$,

(2) $Y_M( {\textbf 1},z)=\on{id}_M$,

(3) The Jacobi identity holds:
\begin{eqnarray}
& &z_0^{-1}\de \left({z_1 - z_2 \over
z_0}\right)Y_M(u,z_1)Y_M(v,z_2)-
z_0^{-1} \de \left({z_2- z_1 \over -z_0}\right)Y_M(v,z_2)Y_M(u,z_1) \nonumber \\
& &\ \ \ \ \ \ \ \ \ \ =z_2^{-1} \de \left({z_1- z_0 \over
z_2}\right)Y_M(Y(u,z_0)v,z_2).
\end{eqnarray}
\end{defn}


\begin{defn}
An admissible $V$-module is a weak $V$-module  which carries a
$\Z_+$-grading $M=\bigoplus_{n \in \Z_+} M(n)$, such that if $v \in
V_r$ then $v_m M(n) \subseteq M(n+r-m-1).$
\end{defn}

\begin{defn}
An ordinary $V$-module is a weak $V$-module which carries a
$\C$-grading $M=\bigoplus_{\l \in \C} M_{\l}$, such that:

1) $\dim(M_{\l})< \infty,$

2) $M_{\l+n}=0$ for fixed $\l$ and $n<<0,$

3) $L(0)w=\l w=\wt(w) w$ for $w \in M_{\l}$, where $L(0)$ is the
component operator of $Y_M(\omega,z)=\sum_{n\in\Z}L(n)z^{-n-2}.$
\end{defn}

It is easy to see that an ordinary $V$-module is an admissible one. If $W$  is an
ordinary $V$-module, we simply call $W$ a $V$-module.

We call a vertex operator algebra $C_2$-cofinite if $V/C_2(V)$ is finite-dimensional  where $C_2(V)=\<u_{-2}v|u,v\in V\>$ \cite{Z}. A vertex operator algebra is called rational if the admissible module
category is semisimple \cite{Z}, \cite{DLM1}. We have the following result from \cite{Z},
\cite{DLM1}, and \cite{ABD}.

\begin{theorem}\label{tt2.1}
 If $V$ is a vertex operator algebra satisfying the $C_{2}$-cofinite
property, $V$ has only finitely many irreducible admissible modules
up to isomorphism. The rationality of of $V$ also implies the same result.
\end{theorem}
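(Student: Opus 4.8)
\medskip

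\noindent\emph{Proof sketch (how one would argue).}
The plan is to transfer both assertions to Zhu's associative algebra $A(V)=V/O(V)$, where $O(V)$ is spanned by the elements $u\circ v=\Res_{z}\frac{(1+z)^{\wt u}}{z^{2}}Y(u,z)v$ for homogeneous $u\in V$ and arbitrary $v\in V$. First I would recall Zhu's correspondence (see \cite{Z}, \cite{DLM1}): after shifting the grading of an admissible module $M=\bigoplus_{n\ge 0}M(n)$ so that $M(0)\neq 0$, the top space $M(0)$ carries a natural $A(V)$-module structure, the assignment $M\mapsto M(0)$ sends irreducible admissible $V$-modules to irreducible $A(V)$-modules, and together with the generalized Verma module construction it induces a bijection between the two sets of isomorphism classes. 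Consequently, for both statements it suffices to bound the number of irreducible $A(V)$-modules, and for that it is enough to prove $\dim_{\C}A(V)<\infty$, since a finite-dimensional associative algebra has, by Wedderburn, only finitely many irreducible modules.

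Assume first that $V$ is $C_{2}$-cofinite, and filter $A(V)$ by conformal weight: let $F_{p}A(V)$ be the image of $\bigoplus_{n\le p}V_{n}$. Each $F_{p}A(V)$ is finite-dimensional and the filtration is exhaustive, so $\dim_{\C}A(V)=\dim_{\C}\on{gr}A(V)$. The key observation is that $C_{2}(V)=\langle u_{-2}v\mid u,v\in V\rangle$ is a weight-graded subspace of $V$ (if $u\in V_{i}$ and $v\in V_{j}$ then $u_{-2}v\in V_{i+j+1}$), so under the identification $\on{gr}V\cong V$ we may regard $C_{2}(V)\subseteq\on{gr}V$. For such $u,v$ the expansion $u\circ v=u_{-2}v+(\wt u)\,u_{-1}v+\binom{\wt u}{2}u_{0}v+\cdots$ shows that $u_{-2}v$ is the top-weight component of $u\circ v\in O(V)$, so $u_{-2}v\in\on{gr}O(V)$; since such elements span $C_{2}(V)$, we get $C_{2}(V)\subseteq\on{gr}O(V)$. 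Hence $\on{gr}A(V)=\on{gr}V/\on{gr}O(V)$ is a quotient of $V/C_{2}(V)=R_{V}$, which is finite-dimensional by hypothesis; therefore $A(V)$ is finite-dimensional and the first assertion follows from the reduction above (this is the argument of \cite{ABD}).

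For the second assertion, assume instead that $V$ is rational, i.e.\ the category of admissible $V$-modules is semisimple. Here I would invoke the structure theorem of Zhu and Dong--Li--Mason (\cite{Z}, \cite{DLM1}): rationality forces $A(V)$ to be a \emph{finite-dimensional} semisimple associative algebra, and then the reduction again yields only finitely many irreducible admissible modules. The substantive content, in both halves, is exactly the finite-dimensionality of $A(V)$. In the $C_{2}$-cofinite case this is the elementary filtration computation above, whose only delicate point is the bookkeeping identifying the top-weight part of $u\circ v$ and the use of weight-homogeneity of $C_{2}(V)$. In the rational case the finite-dimensionality of $A(V)$ lies deeper and is the step I would quote from \cite{Z}, \cite{DLM1} rather than reprove; and in either case the whole reduction rests on Zhu's functors, so I would also make sure to recall their construction and the proof that they restrict to a bijection on irreducibles before carrying out the argument.
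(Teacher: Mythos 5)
Your sketch is correct; it is the standard reduction to Zhu's algebra, with the $C_2$-cofinite case handled by the filtration argument showing $\on{gr}A(V)$ is a quotient of $V/C_2(V)$ (as in \cite{ABD}) and the rational case handled by the semisimplicity and finite-dimensionality of $A(V)$ from \cite{Z}, \cite{DLM1}. The paper itself gives no proof of this theorem --- it simply quotes it from \cite{Z}, \cite{DLM1} and \cite{ABD} --- so your argument is precisely the one implicit in those references, and nothing further is needed.
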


Let $(V, Y, {\bf 1}, \omega)$ be a vertex operator algebra and $(U, Y, {\bf 1}, \omega')$ a vertex operator subalgebra of $V$. Set
$$
C_{V}(U)=\{v\in V| [Y(u,z_1), Y(v,z_2)]=0, u\in U \}.
$$
Recall from \cite{FZ} and \cite{LL} that if
$$
\omega'\in U\cap V_2
$$
and
$$
L(1)\omega'=0,
$$
then
$$
C_{V}(U)=\{v\in V| u_{m}v=0, u\in U, m\geq 0 \}
$$
is a vertex operator subalgebra of $V$ with the conformal vector $\omega-\omega'$. We shall write
$$
Y(\omega',z)=\sum\limits_{n\in\Z}L'(n)z^{-n-2},
$$
where we view the operators $L'(n)$ as  acting on $V$. We have the following result from \cite{LL}.
\begin{prop} Let $(V, Y, {\bf 1}, \omega)$ be a vertex operator algebra and $(U, Y, {\bf 1}, \omega')$ a vertex operator subalgebra of $V$. Then
$$C_V(U)=\operatorname{Ker}_VL'(-1).
$$
\end{prop}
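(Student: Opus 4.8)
The plan is to prove the two inclusions of the Proposition separately, making use of the description
$$C_V(U)=\{v\in V\mid u_mv=0\ \text{for all}\ u\in U,\ m\geq 0\}$$
recalled just above the statement. The inclusion $C_V(U)\subseteq\operatorname{Ker}_VL'(-1)$ is immediate: since $\omega'\in U$ and $Y(\omega',z)=\sum_n L'(n)z^{-n-2}$, we have $\omega'_0=L'(-1)$, so specializing the description of $C_V(U)$ to $u=\omega'$ and $m=0$ shows that $L'(-1)v=\omega'_0v=0$ for every $v\in C_V(U)$.

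For the reverse inclusion I would fix $v$ with $L'(-1)v=\omega'_0v=0$ and aim to show $u_mv=0$ for all $u\in U$ and all $m\geq 0$. The crucial step is to establish the identity
$$L'(-1)(u_mv)=-m\,u_{m-1}v\qquad(u\in U,\ m\in\Z).$$
Its proof is short. First recall the standard fact that the translation operator $\omega'_0$ of the sub-vertex operator algebra $U$ acts on $U$ in the same way as $L(-1)$: for $a\in U$ one has $e^{zL(-1)}a=Y(a,z){\bf 1}=e^{z\omega'_0}a$, hence $\omega'_0a=L(-1)a$. Then the $m=0$ case of the commutator formula gives
$$[\omega'_0,Y(u,z)]=Y(\omega'_0u,z)=Y(L(-1)u,z)=\tfrac{d}{dz}Y(u,z)\qquad(u\in U),$$
so that $[\omega'_0,u_m]=(L(-1)u)_m=-m\,u_{m-1}$. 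Combining this with $\omega'_0v=0$ yields $L'(-1)(u_mv)=[\omega'_0,u_m]v+u_m\omega'_0v=-m\,u_{m-1}v$, as claimed.

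Granting the identity, the proof is finished by a maximal-index argument. Fix $u\in U$ and suppose, for contradiction, that $u_mv\neq 0$ for some $m\geq 0$; since $u_mv=0$ for all sufficiently large $m$, there is a largest such index $m_0\geq 0$. Then $u_{m_0+1}v=0$, so $0=L'(-1)(u_{m_0+1}v)=-(m_0+1)u_{m_0}v$, forcing $u_{m_0}v=0$ and contradicting the choice of $m_0$. Hence $u_mv=0$ for every $m\geq 0$, and as $u\in U$ was arbitrary, $v\in C_V(U)$.

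The only genuine pitfall is a circular one: trying to deduce $u_mv=0$ directly from $\omega'_0v=0$ via skew-symmetry $Y(u,z)v=e^{zL(-1)}Y(v,-z)u$, or via the commutator formula applied to $u_0v$, only reproduces relations among the various $u_kv$ rather than a vanishing statement. What breaks the loop is precisely the clean commutator $[\omega'_0,u_m]=-m\,u_{m-1}$ of $\omega'_0$ with the modes of $U$, which converts the single hypothesis $\omega'_0v=0$ into a downward recursion in $m$ — this is the step I expect to carry the whole argument, and the rest is routine.
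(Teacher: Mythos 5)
Your argument is correct. The paper itself gives no proof of this proposition, simply citing it from [LL]; your two-inclusion argument — $\omega'_0=L'(-1)$ for the easy direction, and the commutator identity $[\omega'_0,u_m]=(L(-1)u)_m=-m\,u_{m-1}$ combined with the truncation condition and a downward induction for the converse — is precisely the standard proof found there (the "vacuum-like vectors" argument), so there is nothing to add.
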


\setcounter{equation}{0}

\section{$\W$-algebras for principle case}

In this section, we recall some notations and facts on the $\W$ algebras. ${\mathcal W}$-algebras may be defined in several ways. The definition here    was given by using the quantum Drinfeld-Sokolov reduction  \cite{FFr1}, \cite{FFr2}, \cite{FKW}, \cite{KRW}, \cite{FBZ}, \cite{Ar2}.  Throughout this section, $k$ is a complex number with no restriction unless otherwise stated.

\subsection{} Let $\fg$ be a complex simple Lie algebra of rank $l$ . Let $(\cdot|\cdot)$ be the normalized non-degenerate bilinear form on $\fg$, that is, $(\cdot|\cdot)=\frac{1}{2h^{\vee}}\cdot $Killing form, where $h^{\vee}$ is the dual Coxeter number of $\fg$.

Let $e$ be a principal nilpotent element of $\fg$ so that $\dim\fg^{e}=l$. By the Jacobson-Morozov
theorem, there exists an $\sl_2$-triple $\{e,f,h_0\}$ associated to $e$ satisfying
$$
[h_0, e]=2e, \  [h_0, f]=-2f, \  [e,f]=h_0.
$$
Set
$$
\fg_j=\{x\in\fg|[h_0, x]=2jx\},  \  {\rm for}  \  j\in\Z.
$$
This gives a triangular decomposition
 $$
\fg=\n_+\oplus\h\oplus\n_{-},
$$
where
$$
\h=\fg_0, \  \n_+=\bigoplus_{j\geq 1}\fg_j, \   \n_-=\bigoplus_{j\geq 1}\fg_{-j}.
$$
Denote by $\Delta_+\subset \mathfrak{h}^*$ the set of positive roots of $\fg$ and by $\{\al_1,\cdots, \al_l\}$ the subset of simple roots. Let $\n_+^*$ be the dual of $\n_+$. Define $\bar{\chi}_+\in \n_+^*$ by
$$
\bar{\chi}_+(x)=(f|x), \  \  {\rm for}  \ x\in\n_+.
$$
Then $\bar{\chi}_+$ is a character of $n_+$, that is, $\bar{\chi}_{+}([n_+, n_+])=0$. Let $\wg$ be the non-twisted affine Lie algebra associated to $\fg$ (see \cite{K} for details). That is,
$$
\wg=\fg\otimes \C [t,t^{-1}]\oplus\C K\oplus\C D,
$$
with the commutation relations
$$
[X(m), Y(n)]=[X, Y](m+n)+m\delta_{m+n,0}(X|Y)K,
$$
$$
[D, X(m)]=mX(m), \ [K, \fg]=0
$$
for $X,Y\in\fg$, $m,n\in\Z$, where $X(n)=X\otimes t^n$. The invariant symmetric  bilinear form $(\cdot|\cdot)$ is extended from $\fg$ to $\wg$ as follows:
$$
\wh=\h\oplus\C K\oplus \C D,
$$
$$
\wg_+=\n_+\otimes \C[t]\oplus (\n_-\oplus \h)\otimes \C[t]t,
$$
$$
\wg_-=\n_-\otimes \C[t^{-1}]\oplus(\n_++\h)\C[t^{-1}]t^{-1}.
$$
Let $$\wh^*=\h^*\oplus\C\Lambda_0\oplus\C\delta$$
be the dual of $\wh$, where $\Lambda_0$ and $\delta$ are the dual elements of $K$ and $D$, respectively.  For $\lambda\in\wh^*$, the number $\lambda(K)$ is called the level of $\lambda$. Let
$\widehat{\Delta}$ be the set of roots of $\wg$, $\Delta_+$ the set of positive roots, and $\widehat{\Delta}_-=-\wD_+$. Denote by $\wD^{re}$ and $\wD^{im}$ the set of real roots and the set of imaginary roots, respectively. Then
$$
\wD^{im}=\{n\delta \ | \ n\in\Z\}, \  \wD_+^{re}=\{\al+n\delta, -\al+m\delta, \ | \ \al\in\Delta_+, n\in\Z_{\geq 0}, m\in\Z_{\geq 1} \}.
$$
Let $V_{\wg}(k,0)$ be the universal vertex operator algebra associated to $\fg$ with level $k$ (see \cite{FZ}, \cite{LL}, \cite{K} for details).  Let ${\mathcal Cl}$ be the superalgebra generated by odd generators: $\psi_{\alpha}(n)$, $\al\in\Delta, n\in\Z$ with the following super Lie relations:
$$
[\psi_{\al}(m), \psi_{\be}(n)]_{+}=\delta_{\al+\be,0}\delta_{m+n,0},
$$
for $\al,\be\in\Delta$, $m,n\in\Z$.  Here $\psi_{\al}$ is regarded as the element of ${\mathcal Cl}$ corresponding to the root vector $e_{\al}(n)\in\wg_{\al}$.  Let $\F$ be the irreducible $\Cl$-module generated by the cycle vector $\1$ such that
$$
\psi_{\al}(n)\1=0, \  {\rm if} \  \al+n\delta\in\wD_{+}^{re}.
$$
$\F$ is naturally a vertex operator superalgebra with  the vacuum vector $\1$ and the fields defined by
$$
Y(\psi_{\al}(-1)\1,z)=\psi_{\al}(z):=\sum\limits_{n\in\Z}\psi_{\al}(n)z^{-n-1}, \  {\rm for} \  \al\in\Delta_{+},
$$
$$
Y(\psi_{\al}(0)\1,z)=\psi_{\al}(z):=\sum\limits_{n\in\Z}\psi_{\al}(n)z^{-n}, \  {\rm for} \  \al\in\Delta_{-}.
$$
The conformal vector is chosen as $\omega=\sum\limits_{\al\in\Delta_+}\psi_{-\al}(-1)\psi_{\al}(-1)\1$. Let
$$
C_k(\fg)=V_{\wg}(k,0)\otimes \F
$$
be the tensor product of the vertex operator algebra $V_{\wg}(k,0)$ and the vertex superalgebra $\F$. Then $C_k(\fg)$ is  a vertex algebra.  Define vertex operators $Q_+^{st}(z)$  and $\psi_+(z)$ as follows:
$$
Q_+^{st}(z)=\sum\limits_{n\in\Z}Q_+^{st}(n)z^{-n-1}:=\sum\limits_{\al\in\Delta_+}e_{\al}(z)\psi_{-\al}(z)
-\frac{1}{2}\sum\limits_{\al,\be,\gamma\in\Delta_+}c_{\al,\be}^{\gamma}\psi_{-\al}(z)\psi_{-\be}(z)\psi_{\gamma}(z),
$$
$$
\psi_+(z)=\sum\limits_{n\in\Z}\psi_+(n)z^{-n}:=\sum\limits_{\al\in\Delta_+}\bar{\chi}_+(e_{\al})\psi_{-\al}(z),
$$
where $\bar{\chi}_+$ is defined as above, and $c_{\al,\be}^{\gamma}$ is the structure constant of $\fg$, that is, for $\al,\be\in\Delta_+$, $e_{\al}\in\fg_{\al}$, $e_{\be}\in\fg_{\be}$,
$$
[e_{\al}, e_{\be}]=\sum\limits_{\gamma\in\Delta_+}c_{\al,\be}^{\gamma}e_{\gamma}.
$$
As in \cite{Ar2}, by abuse notations, we set
$$
\begin{array}{ll}
Q_+^{st}:  =Q_+^{st}(0)=\sum\limits_{\al\in\Delta_+,n\in\Z}e_{\al}(-n)\psi_{-\al}(n)
-\frac{1}{2}\sum\limits_{\al,\be,\gamma\in\Delta_+,s+r+m=0}c_{\al,\be}^{\gamma}\psi_{-\al}(s)\psi_{-\be}(r)
\psi_{\gamma}(m),\\
\chi_+: =\chi_+(1)=\sum\limits_{\al\in\Delta_+}\bar{\chi}_+(e_{\al})\psi_{-\al}(1)\\
 Q_+: =Q_{+}^{st}+\chi_{+}.
\end{array}
$$
We have the following lemma \cite{FBZ}, \cite{Ar2}.
\begin{lem}\label{wal-1}
$(Q_+^{st})^2=\chi_{+}^2=[Q_+^{st}, \chi_{+}]=0$,  $Q_+^2=0$.
\end{lem}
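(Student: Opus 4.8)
\emph{Proof plan.} All three operators are odd, so each claim ``$X^2=0$'' means $[X,X]_+=0$, and since $Q_+=Q_+^{st}+\chi_+$ we have $[Q_+,Q_+]_+=[Q_+^{st},Q_+^{st}]_++2[Q_+^{st},\chi_+]_++[\chi_+,\chi_+]_+$, so $Q_+^2=0$ is a formal consequence of the first three identities. The plan is to prove those three by direct computation in the Clifford algebra $\Cl$, exploiting repeatedly that both the invariant form $(\cdot|\cdot)$ and the structure constants $c_{\al,\be}^{\gamma}$ pair root spaces only in ways that are impossible among positive roots.

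The two easy identities come first. For $\chi_+^2=0$: $\chi_+$ is a linear combination of the operators $\psi_{-\al}(1)$ with $\al\in\Delta_+$, and $[\psi_{-\al}(1),\psi_{-\be}(1)]_+=\delta_{\al+\be,0}\,\delta_{2,0}=0$ (the mode indices add up to $2\neq 0$), so these operators mutually anticommute and $\chi_+^2=0$. For $[Q_+^{st},\chi_+]_+=0$, write $Q_+^{st}=A+B$, where $A=\sum_{\al\in\Delta_+,\,n\in\Z}e_{\al}(-n)\psi_{-\al}(n)$ and $B=-\tfrac12\sum_{\al,\be,\gamma\in\Delta_+,\,s+r+m=0}c_{\al,\be}^{\gamma}\,\psi_{-\al}(s)\psi_{-\be}(r)\psi_{\gamma}(m)$. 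The currents $e_{\al}(m)$ act on the $V_{\wg}(k,0)$-factor and so supercommute with all the $\psi$'s; hence $[A,\chi_+]_+$ reduces to a sum of brackets $[\psi_{-\al}(n),\psi_{-\be}(1)]_+=\delta_{\al+\be,0}\,\delta_{n+1,0}$, all of which vanish because $\al+\be\neq 0$ for $\al,\be\in\Delta_+$. In $[B,\chi_+]_+$, the only nonzero single contraction of a monomial $\psi_{-\al}(s)\psi_{-\be}(r)\psi_{\gamma}(m)$ against $\psi_{-\de}(1)$ comes from $[\psi_{\gamma}(m),\psi_{-\de}(1)]_+=\delta_{\gamma,\de}\,\delta_{m+1,0}$ and produces, up to sign, the coefficient $c_{\al,\be}^{\gamma}\,\bar{\chi}_+(e_{\gamma})$ with $\al,\be,\gamma\in\Delta_+$; but $\bar{\chi}_+(e_{\gamma})=(f|e_{\gamma})$ is nonzero only when $\gamma$ is a simple root, and then $c_{\al,\be}^{\gamma}=0$ since a simple root is never the sum of two positive roots. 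Hence $[B,\chi_+]_+=0$ and $[Q_+^{st},\chi_+]_+=0$.

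The remaining identity $(Q_+^{st})^2=0$ is the classical Drinfeld--Sokolov BRST nilpotency and is the one genuinely nontrivial point; I would either quote it directly from \cite{FBZ}, \cite{Ar2}, or reprove it as follows. With $Q_+^{st}=A+B$ as above and the normal ordering understood in $B$, one has $(Q_+^{st})^2=A^2+[A,B]_++B^2$, and a short computation with the Clifford and affine relations gives: (i) the would-be central term in $A^2$ is proportional to $(e_{\al}|e_{\be})$, hence vanishes because $(\n_+|\n_+)=0$; (ii) $A^2$ and $[A,B]_+$ are both ``one current times two ghosts'' expressions with coefficients $c_{\be,\al}^{\gamma}$ and $c_{\al,\be}^{\gamma}$ respectively, and cancel by the antisymmetry $c_{\al,\be}^{\gamma}=-c_{\be,\al}^{\gamma}$, leaving $(Q_+^{st})^2=B^2$; (iii) $B$ is, up to signs and a relabelling of modes, the semi-infinite Chevalley--Eilenberg differential of the Lie algebra $\n_+\otimes\C[t,t^{-1}]$ with trivial coefficients, so $B^2=0$ follows from the Jacobi identity $\sum_{\mathrm{cyc}}c_{\al,\be}^{\nu}c_{\nu,\gamma}^{\mu}=0$ after collecting the terms produced by the possible single and double contractions. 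Steps (i) and (ii) are immediate; the bookkeeping of contractions in step (iii) is where the only real work lies. Finally $Q_+^2=(Q_+^{st})^2+[Q_+^{st},\chi_+]_++\chi_+^2=0$. Thus the sole obstacle is the fermionic bookkeeping in $B^2=0$; $\chi_+^2=0$, $[Q_+^{st},\chi_+]_+=0$, and the reduction $Q_+^2=0$ are all immediate from the ``no forbidden pairing'' principle.
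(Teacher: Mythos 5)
Your outline is correct, but note that the paper does not prove this lemma at all: it simply quotes it from [FBZ] and [Ar2], so there is no in-paper argument to compare against. Your computation is essentially the standard one underlying those references, and the three key observations are exactly the right ones: $\chi_+^2=0$ because the mode indices sum to $2\neq 0$ (and $\al+\be\neq 0$ for $\al,\be\in\Delta_+$); $[Q_+^{st},\chi_+]_+=0$ because the only possible contraction produces $c_{\al,\be}^{\gamma}\bar{\chi}_+(e_{\gamma})$, and $\bar{\chi}_+(e_\gamma)=(f|e_\gamma)$ is supported on simple roots, which are never sums of two positive roots --- this is precisely the statement, recorded in the paper, that $\bar{\chi}_+$ is a character of $\n_+$, i.e. $\bar{\chi}_+([\n_+,\n_+])=0$; and the reduction $Q_+^2=(Q_+^{st})^2+[Q_+^{st},\chi_+]_++\chi_+^2$ is immediate from oddness. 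The one place where your write-up is only a plan rather than a proof is $(Q_+^{st})^2=0$: the cancellation of the ``one current, two ghosts'' terms between $A^2$ and $[A,B]_+$ and, above all, the verification that $B^2=0$ (including checking that the double-contraction terms do not produce an anomaly --- they do not, because $\n_+$ is nilpotent and $(\n_+|\n_+)=0$) require the explicit mode bookkeeping you defer. Since you acknowledge this and correctly identify it as the Chevalley--Eilenberg/Jacobi-identity computation, citing [FBZ] or [Ar2] for that step, as the paper itself does, is entirely acceptable; if you want a self-contained proof you must actually carry out step (iii).
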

Let $\F=\bigoplus_{i\in\Z}\F^i$ be an additional $\Z$-gradation of the vertex algebra $\F$ defined by
$$
\deg \1=0, \ \deg \psi_{\al}(n)=\left\{\begin{array}{rr}
1 \ \ {\rm for} \ \ \al\in\Delta_-\\
-1 \ \ {\rm for} \  \  \al\in\Delta_+.
\end{array}
\right.
$$
For $i\in\Z$, set
$$
C^i_k(\fg)=V_{\wg}(k,0)\otimes \F^i.
$$
This gives a $\Z$-gradation of $C_k(\fg)$:
$$
C_k(\fg)=\bigoplus_{i\in\Z}C^i_k(\fg).
$$
By definition,
$$
Q_+\cdot C^i_k(\fg)\subset C^{i+1}_k(\fg).$$
Then by Lemma \ref{wal-1},  $(C_k(\fg), Q_+)$ is a BRST complex of vertex algebras in the sense of Section 3.15 in \cite{Ar2} ( also see \cite{FBZ}). This complex is called the BRST complex of the quantized Drinfeld-Sokolov (``+``) reduction \cite{FFr4}, \cite{FBZ}, \cite{Ar2}.
The following assertion was proved by B. Feigin and E. Frenkel \cite{FFr4} for generic $k$, by J. de Boer and T. Tjin \cite{dBT} for $k$ in the case that $\fg=\sl_n$, and by E. Frenkel and D. Ben-Zvi for
the general case \cite{FBZ}.
\begin{theorem}\label{coho-1}
The cohomology $H^i(C_k(\fg))$ is zero for all $i\neq 0$.
\end{theorem}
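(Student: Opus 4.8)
Since $\W^k(\fg)=H^0(C_k(\fg))$ by construction, the theorem is exactly the assertion that the Drinfeld--Sokolov BRST complex $(C_k(\fg),Q_+)$ is acyclic away from cohomological degree $0$. I would prove it by the cohomological method of Feigin--Frenkel, de Boer--Tjin and Frenkel--Ben-Zvi: construct a spectral sequence whose first nonzero page is an ``abelianized'' cohomology that is manifestly concentrated in degree $0$, and then observe that the spectral sequence degenerates there for degree reasons.

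The first step is to equip $C_k(\fg)=V_{\wg}(k,0)\otimes\F$ with an increasing, exhaustive and separated filtration $F_\bullet C_k(\fg)$ that (i) is compatible with the conformal grading of $C_k(\fg)$ and bounded in each conformal weight, and (ii) makes $Q_+=Q_+^{st}+\chi_+$ a filtered operator. Concretely one filters by a combination of conformal weight, charge, and the $\Delta_+$-heights of the modes appearing, arranged so that on $\operatorname{gr}C_k(\fg)$ the cubic fermionic term of $Q_+^{st}$ and the ``higher-root'' bosonic contributions drop out and only a Koszul-type leading differential survives. Since each conformal-weight component of $C_k(\fg)$ is finite-dimensional and meets only finitely many filtration steps, the associated spectral sequence converges to $H^\bullet(C_k(\fg))$ and, in every fixed conformal weight, degenerates after finitely many pages; this disposes of all convergence issues coming from the infinite-dimensionality of $C_k(\fg)$.

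The heart of the argument is the identification and computation of the first page. Through $\F$ the fermions $\psi_{\pm\al}(n)$ ($\al\in\Delta_+$, $n\in\Z$) form the semi-infinite exterior algebra of $\n_+((t))$, and $Q_+^{st}$ is the corresponding semi-infinite Chevalley--Eilenberg differential acting on $V_{\wg}(k,0)$, twisted by the nilpotent character through $\chi_+$. Passing to $\operatorname{gr}$ turns this into a Koszul differential, and the claim is that the resulting cohomology --- namely $H^{\infty/2+\bullet}\big(\n_+((t));\,V_{\wg}(k,0)\big)$ together with the small Koszul complex $\bigwedge[\theta_1,\dots,\theta_l]$, $\theta_i=\psi_{\al_i}(-1)$, with differential $\sum_i c_i\partial_{\theta_i}$, $c_i=\bar\chi_+(e_{\al_i})\neq0$, coming from the simple-root part of $\chi_+$ --- is concentrated in cohomological degree $0$. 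This is an affine Kostant-type vanishing: one uses that $V_{\wg}(k,0)=\operatorname{Ind}_{\fg[t]\oplus\C K}^{\wg}\C$ is the vacuum module, so that after a further PBW/Li-type filtration replacing $V_{\wg}(k,0)$ by a polynomial ring the $\n_+((t))$-cohomology is computed by an honest Koszul complex which is acyclic off degree $0$, while the $\chi_+$-part $\bigwedge[\theta_1,\dots,\theta_l]$ with $\sum_i c_i\partial_{\theta_i}$ is acyclic except in degree $0$ after the obvious change of generators.

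Once the first nonzero page is seen to be concentrated in cohomological degree $0$, every higher differential $d_r$ raises cohomological degree by $1$ and hence vanishes, so the spectral sequence degenerates and $H^i(C_k(\fg))=0$ for $i\neq0$ (with $H^0(C_k(\fg))=\W^k(\fg)$). The main obstacle, clearly, is the third paragraph: choosing a filtration whose $\operatorname{gr}$-differential is genuinely Koszul, and then proving the affine Kostant-type vanishing of $H^{\infty/2+\bullet}(\n_+((t));V_{\wg}(k,0))$ in nonzero degrees --- this is where the free/induced structure of the vacuum module is essential and where the classical, finite-dimensional Kostant theorem must be bootstrapped to the loop algebra. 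By contrast Lemma \ref{wal-1} (which makes $Q_+$ a differential), the degeneration of the spectral sequence, and the convergence via the conformal grading are formal. In practice I would simply invoke the treatment of \cite{FBZ} for general $\fg$ and general $k$.
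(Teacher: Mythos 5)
The paper does not prove this statement at all: it is quoted as a known result, attributed to Feigin--Frenkel \cite{FFr4} (generic $k$), de Boer--Tjin \cite{dBT} ($\fg=\sl_n$), and Frenkel--Ben-Zvi \cite{FBZ} (general case). Your outline is a faithful sketch of the standard argument from those references (filtration, spectral sequence whose first page is a Koszul/Kostant-type semi-infinite cohomology concentrated in degree $0$, degeneration for degree reasons), and since you ultimately defer to \cite{FBZ} exactly as the paper does, your approach is essentially the same.
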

Set
$$
\W^k(\fg):=H^0(C_k(\fg)).
$$
Then $\W^k(\fg)$ is a vertex operator algebra. We have the following result from \cite{FBZ}. 
\begin{theorem}\label{coho-2}
The vertex operator algebra $H^0(C_k(\fg))$ is strongly generated by  elements of degrees $d_i+1$, $i=1,2,\cdots,l$, where $d_i$ is the $i$th exponent of $\fg$ and $l$ is the rank of $\fg$.
\end{theorem}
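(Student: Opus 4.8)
The plan is to compute the BRST cohomology $H^\bullet(C_k(\fg))$ by passing to a suitable associated graded on which the computation becomes purely classical and Poisson-geometric, to identify that classical limit with the coordinate ring of the arc space of the Slodowy slice, to read off a free generating set from Kostant's structure theorem, and finally to lift the generators back to $\W^k(\fg)=H^0(C_k(\fg))$.

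Concretely, I would equip $C_k(\fg)=V_{\wg}(k,0)\otimes\F$ with the Kazhdan filtration $F_\bullet$ --- the filtration whose degree combines the conformal weight with the $h_0$-grading. Its associated graded $\operatorname{gr}^F C_k(\fg)$ is a (super)commutative differential algebra, namely $\C[J_\infty\fg^*]\otimes\Lambda$ with $\Lambda$ the exterior algebra on the odd ghost variables and $J_\infty$ the arc space; and $\operatorname{gr}^F Q_+$ is the classical Koszul--Chevalley differential for the BRST/Hamiltonian reduction of $J_\infty\fg^*$ by $J_\infty N_+$ at the constant arc $\bar\chi_+$. The key classical input is that $\bar\chi_+$ is a regular value of the relevant moment map and that the $J_\infty N_+$-action on the fibre $J_\infty(f+\mathfrak b_+)$ is free --- this is the arc-space form of Kostant's statement that $f+\fg^e$ is transversal to the $N_+$-orbits. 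It follows that $H^i(\operatorname{gr}^F C_k(\fg),\operatorname{gr}^F Q_+)=0$ for $i\neq0$ while $H^0=\C[J_\infty S]$, the functions on the arc space of the Slodowy slice $S=f+\fg^e$. By Kostant, $S\cong\A^l$ with homogeneous coordinates of conformal weights $d_1+1,\dots,d_l+1$, so $\C[J_\infty S]$ is the differential polynomial algebra freely generated by those $l$ coordinates; in vertex-algebra language it is strongly generated by $l$ elements of conformal weights $d_i+1$.

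Next I would transfer this to $\W^k(\fg)$. Since $\operatorname{gr}^F Q_+$ already has cohomology concentrated in degree $0$, the spectral sequence of the filtered complex $(C_k(\fg),Q_+,F_\bullet)$ collapses at its first page, so $\operatorname{gr}^F H^\bullet(C_k(\fg))\cong H^\bullet(\operatorname{gr}^F C_k(\fg))$; this reproves Theorem~\ref{coho-1} and gives $\operatorname{gr}^F\W^k(\fg)\cong\C[J_\infty S]$ as graded vertex Poisson algebras (the Euler characteristic $\sum_i(-1)^i\ch C^i_k(\fg)$, reducing --- after the cancellations forced by PBW for $V_{\wg}(k,0)$ against the fermion characters --- to $\prod_{i=1}^l\prod_{n\ge d_i+1}(1-q^n)^{-1}$, gives an independent check of the resulting graded dimensions). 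Now choose cocycles $W^{(i)}\in C^0_k(\fg)$ of conformal weight $d_i+1$ whose principal symbols in $\operatorname{gr}^F$ are the coordinate generators of $\C[J_\infty S]$. By the standard reconstruction principle for filtered vertex algebras --- a family of fields strongly generates a vertex algebra as soon as its leading symbols strongly generate the associated graded --- the classes $[W^{(i)}]$, $i=1,\dots,l$, strongly generate $H^0(C_k(\fg))=\W^k(\fg)$, which is the assertion.

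The hardest step is the classical input: establishing that $\operatorname{gr}^F Q_+$ is exact away from cohomological degree $0$, i.e.\ the Koszul regularity of the principal-character constraint on $J_\infty\fg^*$. This is precisely where the geometry of the principal nilpotent enters --- the freeness of the $N_+$-action on $\bar\chi_+ + \n_+^\perp$ and the existence of Kostant's transversal slice, bootstrapped from $\fg^*$ to its arc space. Once that regularity is secured, the identification $S\cong\A^l$, the degree bookkeeping that turns $h_0$-weights $2d_i$ into conformal weights $d_i+1$, and the spectral-sequence and lifting formalism are all essentially routine.
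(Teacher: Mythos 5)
The paper offers no argument for this theorem at all: it is quoted directly from [FBZ], so there is no internal proof to match your attempt against. Your proposal is a correct outline of a genuine proof, but it follows a different route from the one in the cited source. Frenkel--Ben-Zvi prove the statement by exploiting the splitting $Q_+=Q_+^{st}+\chi_+$ and the associated bigrading to decompose the complex $C_k(\fg)$ into a tensor product of a subcomplex carrying all the cohomology and a contractible one, then reading off the generators (and, for generic $k$, identifying $\W^k(\fg)$ inside a Heisenberg vertex algebra as an intersection of kernels of screening operators); the degrees $d_i+1$ emerge from the $\rho^\vee$-shifted grading on the surviving subcomplex. Your argument instead runs through the Kazhdan filtration, the identification of $H^0$ of the associated graded complex with the functions on the arc space of the Slodowy slice $f+\fg^e$, Kostant's freeness/transversality theorem to get the classical vanishing and the polynomial structure, and a lifting-of-symbols step to conclude strong generation; this is the strategy of Arakawa and of De Sole--Kac rather than of [FBZ]. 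Both approaches are valid. Yours has the advantage of delivering Theorem~\ref{coho-1} and the strong generation statement in one stroke and of making the role of the Slodowy slice geometrically transparent; the price is that the two steps you flag as "routine" are exactly where the technical work lives --- the convergence of the spectral sequence for the (unbounded in cohomological degree, only weight-wise locally finite) Kazhdan filtration, and the reconstruction principle that symbols strongly generating $\operatorname{gr}^F$ lift to strong generators --- both of which require the weight-by-weight finiteness arguments spelled out in Arakawa's papers. The tensor-decomposition proof of [FBZ] avoids the filtration formalism but is less robust (its cleanest form is for generic $k$, with the general case obtained by a deformation/flatness argument).
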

Denote by $\W_k(\fg)$ the unique simple quotient of $\W^k(\fg)$ at a non-critical level $k$. The following theorem has been proved in \cite{BFM} and \cite{W} in the case that $\fg=\sl_2(\C)$ and
in \cite{DLTV} in the case that $\fg=\sl_3(\C)$ and in \cite{Ar4} for the general case.
\begin{theorem}\label{rationality-1}
The simple $\W$-algebra $\W_k(\fg)$ is rational (and $C_2$-cofinite \cite{Ar3}), and the set of isomorphism classes of minimal series representations of $\W^k(\fg)$ forms  the complete set of the isomorphism classes of simple modules over $\W_k(\fg)$, if $k$ satisfies $k+h^{\vee}=\frac{p}{q}\in{\mathbb Q}_{>0}$, $(p,q)=1$ and
$$
\left\{\begin{array}{l}
p\geq h^{\vee}, \ q\geq h, \ {\rm if} \ (q, r^{\vee})=1\\
p\geq h, \ q\geq r^{\vee}h_{L_{\frak{g}}^{\vee}}, \ {\rm if} \ (q,r^{\vee})=r^{\vee},
\end{array}
\right.
$$
where $L_{\frak{g}}$ is the Langlands dual Lie algebra of $\frak{g}$ and $r^{\vee}$ is the maximal number of the edges in the Dykin diagram of $\frak{g}$.
\end{theorem}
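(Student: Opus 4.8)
\emph{Proof strategy.} The plan is to establish the three assertions---$C_2$-cofiniteness, rationality, and the classification of simple modules---by combining the theory of associated varieties with the quantum Drinfeld--Sokolov reduction functor and a computation of Zhu's algebra. First I would prove $C_2$-cofiniteness by showing that the associated variety $X_{\W_k(\fg)}$ is a single point. For the universal object one has $X_{\W^k(\fg)}\cong\mathcal S$, the Slodowy slice $f+\fg^{e}$ at the principal nilpotent, an affine space of dimension $l=\operatorname{rank}\fg$; since the associated variety is compatible with Drinfeld--Sokolov reduction and $\W_k(\fg)$ is itself the reduction of the simple affine vertex algebra $L_{\wg}(k,0)$, one gets $X_{\W_k(\fg)}=X_{L_{\wg}(k,0)}\cap\mathcal S$. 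The non-degeneracy hypothesis on $k$ (the condition $q\geq h$, resp.\ its counterpart when $(q,r^{\vee})=r^{\vee}$) is exactly what forces $X_{L_{\wg}(k,0)}$ to be the full nilpotent cone $\mathcal N$ of $\fg$; as $f$ is regular, $\mathcal N\cap\mathcal S=\{f\}$. Hence $\W_k(\fg)$ is lisse, and by Theorem \ref{tt2.1} it has only finitely many simple modules, all ordinary.

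Next I would study the $(-)$-version $H^{0}_{f,-}$ of the quantum Drinfeld--Sokolov reduction, a functor from the category $\mathcal O_k$ of smooth level-$k$ $\wg$-modules to $\W^k(\fg)$-modules, using the following inputs, essentially from \cite{Ar2} and its refinements: (i) the higher cohomologies vanish on $\mathcal O_k$, so $H^{0}_{f,-}$ is exact there; (ii) $H^{0}_{f,-}$ sends a simple admissible highest-weight module $L(\lambda)$ of level $k$ either to zero or to a simple $\W^k(\fg)$-module $\mathbf L(\gamma_\lambda)$, the image being nonzero exactly when $\lambda$ is non-degenerate with respect to $f$, inequivalent such $\lambda$ giving non-isomorphic modules; and (iii) $H^{0}_{f,-}(L_{\wg}(k,0))\cong\W_k(\fg)$, so that $H^{0}_{f,-}$ restricts to an exact, simple-preserving functor from the category of $L_{\wg}(k,0)$-modules lying in category $\mathcal O$ to $\W_k(\fg)$-modules. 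The nonzero images $\mathbf L(\gamma_\lambda)$ are, by definition, the minimal series representations of $\W^k(\fg)$; they descend to $\W_k(\fg)$, and there are finitely many of them, say $N$.

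To finish, I would show that these $N$ modules exhaust the simple $\W_k(\fg)$-modules and that the module category is semisimple. For the classification I would pass to Zhu's algebra: since $f$ is principal, $A(\W^k(\fg))$ is the finite $\W$-algebra $U(\fg,f)\cong Z(U(\fg))\cong\C[z_1,\dots,z_l]$, so $A(\W_k(\fg))$ is a finite-dimensional commutative quotient $\C[z_1,\dots,z_l]/I$; identifying its maximal spectrum with the finite set of Harish--Chandra images of the central characters of the $\mathbf L(\gamma_\lambda)$, and comparing $\dim A(\W_k(\fg))$ with $N$, one forces $I$ to be radical, so $A(\W_k(\fg))\cong\C^{N}$ is semisimple and its simple modules are precisely the tops of the $\mathbf L(\gamma_\lambda)$. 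Rationality would then follow by transporting, through the exact functor $H^{0}_{f,-}$, the semisimplicity of the category of $L_{\wg}(k,0)$-modules lying in $\mathcal O$ (the ``rationality in category $\mathcal O$'' of the admissible affine vertex algebra): the Zhu-algebra comparison shows this functor is essentially surjective onto $\W_k(\fg)$-modules, so exactness turns split sequences upstairs into split sequences downstairs, whence $\W_k(\fg)$ is rational with exactly the stated simple modules.

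The main obstacle is the package of representation-theoretic facts at admissible---hence non-generic---level: the exactness and non-vanishing of the Drinfeld--Sokolov reduction on $\mathcal O_k$ and the identity $H^{0}_{f,-}(L_{\wg}(k,0))\cong\W_k(\fg)$ rest on the delicate combinatorics of principal admissible weights and of their non-degeneracy relative to $f$, established through BGG-type resolutions and character identities; and the Zhu-algebra computation showing that $A(\W_k(\fg))$ is reduced of the expected dimension---equivalently, that the minimal model carries no logarithmic modules---is what actually upgrades ``finitely many simple modules'' to full rationality. By contrast, the associated-variety computation behind $C_2$-cofiniteness, although conceptually the linchpin, is the most formal of the ingredients.
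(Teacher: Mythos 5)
First, a point of context: the paper does not prove this theorem at all --- it is imported from the literature, with the $C_2$-cofiniteness attributed to \cite{Ar3}, \cite{Ar4} and the rationality to \cite{Ar5} (and to \cite{BFM}, \cite{W}, \cite{DLTV} in the low-rank cases). So there is no internal proof to compare against, and your proposal has to be judged as a reconstruction of those references. As such, its architecture is essentially the right one: the identification of $X_{\W^k(\fg)}$ with the Slodowy slice $f+\fg^{e}$, the compatibility of associated varieties with Drinfeld--Sokolov reduction, and the fact that non-degeneracy of the admissible level forces $X_{L_{\wg}(k,0)}$ to be the full nilpotent cone (so that the intersection with the slice is the single point $f$) is exactly the lisse-ness argument of \cite{Ar4}; the ``$-$''-reduction functor with its exactness and preservation of simplicity on admissible modules, and the computation $A(\W^k(\fg))\cong Z(U(\fg))\cong\C[z_1,\dots,z_l]$, are the correct inputs for the classification of simple modules. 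Be aware, however, that several items you list as ``inputs'' (the associated variety of $L_{\wg}(k,0)$ at admissible level, the exactness and simplicity-preservation of $H^{0}_{f,-}$) are themselves the principal theorems of those papers, so what you have is a roadmap rather than a proof.

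The genuine gap is in the final step, where you deduce rationality by ``transporting semisimplicity through the exact functor $H^{0}_{f,-}$.'' Exactness plus essential surjectivity on simple objects does not allow you to split an arbitrary short exact sequence $0\to M\to E\to N\to 0$ of $\W_k(\fg)$-modules: to invoke the splitting upstairs you would need to lift $E$ itself to category $\mathcal{O}_k$, and nothing in your argument produces such a lift (the functor is not full, let alone an equivalence of categories). Likewise, semisimplicity of the Zhu algebra $A(\W_k(\fg))$ classifies the simple positive-energy modules but does not by itself exclude non-split extensions of ordinary modules; Zhu-algebra semisimplicity is strictly weaker than rationality. In \cite{Ar5} this is precisely the hard part: after the classification one must prove directly that $\on{Ext}^1$ vanishes between any two minimal series representations, and that argument (via contravariant forms, self-duality of the minimal series, and the structure of the relevant highest-weight categories) is the core of the paper. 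Without an independent $\on{Ext}^1$-vanishing argument, your proof of rationality does not close.
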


The following conjecture is well known  \cite{FKW}, \cite{BS}, \cite{KWa4}.
\begin{conjecture} Let $\fg$ be a simply-laced simple Lie algebra over $\C$ and $h$ its  Coxeter number. Then for
$p\in{\mathbb Z}$ such that $p\geq h$ and $k=-h+\frac{p+h}{p+h+1}$,
$$
{\W}_k(\frak{g})\cong C_{L_{\widehat{\frak{g}}}(p,0)\otimes L_{\widehat{\frak{g}}}(1,0)}(L_{\widehat{\frak{g}}}(p+1,0)).
$$
\end{conjecture}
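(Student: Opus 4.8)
The plan is to prove the isomorphism first at the level of \emph{universal} (one-parameter) vertex algebras and then specialize to the simple quotients at integer level using rationality. I would regard the level as a generic parameter $t$, form the universal objects $V_{\wg}(t,0)$ and $V_{\wg}(1,0)$, and set
$$
C^{t}=C_{V_{\wg}(t,0)\otimes V_{\wg}(1,0)}\bigl(V_{\wg}(t+1,0)\bigr),
$$
the commutant of the diagonally embedded $V_{\wg}(t+1,0)$. By the Proposition recalled in Section 2 its conformal vector is $\omega-\omega'$, where $\omega$ is the total Sugawara vector of $V_{\wg}(t,0)\otimes V_{\wg}(1,0)$ and $\omega'$ that of the diagonal $V_{\wg}(t+1,0)$; a direct computation gives its central charge, which matches that of $\W^{k(t)}(\fg)$ with $k(t)=-h+\frac{t+h}{t+h+1}$ — this is exactly the character coincidence of \cite{KWa2}, \cite{FKW}. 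The goal of the first stage is the one-parameter isomorphism $\W^{k(t)}(\fg)\cong C^{t}$.

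Next I would establish that $C^{t}$ is strongly and freely generated by fields of conformal weights $d_1+1,\dots,d_l+1$, i.e.\ of the same \emph{type} $\W(\{d_i+1\})$ as $\W^{k(t)}(\fg)$ in Theorem \ref{coho-2}. The weight-two generator is the coset Virasoro $\omega-\omega'$. The weight-three generator is produced explicitly in the type-$A$ analysis of this paper, and it serves as the model in general: for each required weight one constructs the lowest primary vector in $C^{t}$ by taking an appropriate diagonal-highest-weight combination in $V_{\wg}(t,0)\otimes V_{\wg}(1,0)$ and orthogonalizing it against normally ordered products of the lower generators. That these fields exhaust a strong generating set — and that no further generators occur — would be confirmed by a Hilbert-series/character count, using the shared normalized character to match graded dimensions weight by weight.

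The core step is to produce a vertex-algebra homomorphism $\Phi\colon \W^{k(t)}(\fg)\to C^{t}$ and show it is an isomorphism. Since $\W^{k(t)}(\fg)$ is freely generated of type $\W(\{d_i+1\})$, a map out of it is determined by the images of its strong generators; I would send them to the coset generators of the matching weights and reduce well-definedness to the assertion that the nonlinear OPE relations of $\W^{k(t)}(\fg)$ hold among the coset fields. This I would verify via a rigidity principle: a simple one-parameter vertex algebra of type $\W(\{d_i+1\})$ whose low-order OPE structure constants agree with those of $\W^{k(t)}(\fg)$ must be isomorphic to it — here the screening (Feigin--Frenkel) realization of $\W^{k(t)}(\fg)$ and the classification of $\W$-type algebras do the heavy lifting, so that only finitely many structure constants of the coset need be computed. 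Comparing the leading OPEs of the coset generators with those of the W-algebra then forces $\Phi$ to be a well-defined isomorphism of one-parameter vertex algebras.

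Finally I would specialize to $t=p\in\Z_{\geq h}$ and pass to simple quotients. The homomorphism $\Phi$ descends to a map from $\W_p(\fg)$ into the simple coset $C=C_{L_{\wg}(p,0)\otimes L_{\wg}(1,0)}(L_{\wg}(p+1,0))$, which is surjective because the coset generators lie in its image. By Theorem \ref{rationality-1} the value $k=-h+\frac{p+h}{p+h+1}$ lies in the non-degenerate range, so $\W_p(\fg)$ is rational and $C_2$-cofinite; together with the equality of normalized characters from \cite{KWa2}, \cite{FKW} this forces the graded dimensions of $\W_p(\fg)$ and of $C$ to coincide, upgrading the surjection to an isomorphism. \textbf{The main obstacle} is the uniform execution of the second and third stages across \emph{all} simply-laced types: there is no case-free analogue of the explicit weight-three generator for the higher weights, so establishing free strong generation and verifying the full set of W-algebra OPE relations in general is delicate; equally, the descent in the final stage presupposes simplicity of the coset $C$, for which — as the introduction notes — no general theorem is available, and which would itself have to be argued (e.g.\ from the rationality of $V_{\wg}(p,0)\otimes V_{\wg}(1,0)$ and $V_{\wg}(p+1,0)$ together with the structure of the one-parameter map).
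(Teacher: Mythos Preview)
The statement is presented in the paper as a \emph{conjecture} and is not proved there in general; the paper establishes only the case $\fg=\sl_3$. So there is no proof of the full statement to compare your proposal against, and what you have written is an attempt to go beyond what the paper actually achieves.

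Your strategy is also different in kind from what the paper does for $\sl_3$. You work with a one-parameter universal coset $C^t$, assert that it is strongly and freely generated of the same type as $\W^{k(t)}(\fg)$, and invoke a rigidity/classification principle to produce an isomorphism generically before specializing. The paper, by contrast, works entirely at the fixed integer level and only for $n=3$: via level-rank duality it passes to $C_{K(\sl_{l+1},3)}(K(\sl_l,3))$, writes down an explicit weight-$3$ primary $W$, verifies by direct computation that $W_iW\in L(c,0)$ for all $i\geq 0$ (using that this coset has no weight-$4$ primary), and then constructs the isomorphism by hand, sending ordered monomials in $L(-m)$ and $W_{-k}$ to the corresponding monomials in the $\W$-algebra generators and checking well-definedness through the invariant bilinear form together with the equality of characters. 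This succeeds precisely because $\W_k(\sl_3)$ has only the two generators $\omega$ and $\wW$ and the single family of OPEs $W_iW$ closes on the Virasoro subalgebra; the method does not scale to higher rank.

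The gap in your general argument is exactly the one you flag at the end: neither the free strong generation of $C^t$ in type $\W(\{d_i+1\})$ for arbitrary simply-laced $\fg$, nor the ``rigidity principle'' you invoke to transport the $\W$-algebra OPEs, is a result available in this paper, and each is a substantial theorem in its own right. Without them the homomorphism $\Phi$ cannot even be defined, so the proposal is a plausible outline rather than a proof. (Simplicity of the integer-level coset, on the other hand, is not an obstruction here: it is recorded in the paper as a consequence of \cite{JL2}.)
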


\section{Level-Rank Duality}
\setcounter{equation}{0}

For $k\in\Z_{+}$ and  a complex finite-dimensional simple Lie algebra $\frak{g}$ with normalized non-degenerate bilinear form, let $\widehat{\frak g}$ be the corresponding  affine Lie algebra and $L_{\widehat{\frak g}}(k,0)$  the simple vertex operator algebra associated with the integrable highest weight module of $\widehat{\frak g}$ with level $k$. Let $\frak{h}$ be the  Cartan subalgebra of $\frak{g}$ and $L_{\widehat{\frak{h}}}(k,0)$  the associated Heisenberg  vertex operator subalgebra of $L_{\widehat{\frak g}}(k,0)$. Let
$$
K(\frak{g},k)=\{v\in L_{\widehat{\frak g}}(k,0)| [Y(u,z_1),Y(v,z_2)]=0, u\in L_{\widehat{\frak h}}(k,0)\}.
$$
Then  $K({\frak g},k)$ is the so-called parafermion vertex operator algebra (see \cite{BEHHH}, \cite{DLY}, etc.).

Let $s\in\Z_{\geq 2}$ and $\underline{\ell}=(l_1,\cdots,l_s)$ such that $l_{1},\cdots,l_{s}\in \Z_{+}$. Let $L_{\widehat{\frak g}}(l_{i},0)$ be the simple vertex operator algebra associated with the integrable highest weight module of $\widehat{\frak g}$ with level $l_{i}$, $i=1,2,\cdots,s$. Then we have the tensor product vertex operator algebra:
$$V=L_{\widehat{\frak g}}(l_{1},0)\otimes L_{\widehat{\frak g}}(l_{2},0)\otimes \cdots\otimes L_{\widehat{\frak g}}(l_{s},0).$$
Denote
$$
l=|\underline{\ell}|=\sum\limits_{i=1}^{s}l_{i}.
$$
${\frak g}$ can be naturally imbedded into the weight one subspace of $V$ diagonally as follows:
$$
{\frak{g}}\hookrightarrow V_{1}\subseteq V
$$
$$
a\mapsto a(-1){\bf 1}\otimes {\bf 1}\otimes\cdots \otimes {\bf 1}+{\bf 1}\otimes a(-1){\bf 1}\otimes {\bf 1}\otimes \cdots\otimes {\bf 1}+{\bf 1}\otimes \cdots\otimes {\bf 1}\otimes a(-1){\bf 1}.
$$
It is known that the vertex operator subalgebra $U$ of $V$ generated by $\frak{g}$ is isomorphic to the simple vertex operator algebra $L_{\widehat{\frak{g}}}(l,0)$ (\cite{FZ}, \cite{K}, \cite{LL}). Let $C_V(U)$ be the commutant of $U$ in $V$.  We have the following lemma (\cite{JL2}).

\begin{lem}$C_{V}(U)$ is a simple vertex operator subalgebra of $V$.
\end{lem}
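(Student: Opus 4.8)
The plan is to equip $C_V(U)$ with a non-degenerate invariant bilinear form. Once $C_V(U)$ is known to be a $\Z_{\geq 0}$-graded vertex operator algebra with one-dimensional degree-zero part, it is simple precisely when its (unique up to scalar) invariant bilinear form is non-degenerate, since the radical of that form is the unique maximal proper ideal. First I would check that $C_V(U)$ is indeed a vertex operator subalgebra with the expected structure: $U\cong L_{\wg}(l,0)$ carries its Sugawara conformal vector $\omega'\in U\cap V_2$ with $L(1)\omega'=0$, so by the construction recalled in Section 2, $C_V(U)$ is a vertex operator subalgebra of $V$ with conformal vector $\omega''=\omega-\omega'$ and $C_V(U)=\{v\in V\mid u_m v=0,\ u\in U,\ m\geq 0\}=\operatorname{Ker}_V L'(-1)$. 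Because $\omega'_1=L'(0)$ and $\omega'_2=L'(1)$ are among the operators $u_m$ with $u\in U$, $m\geq 0$, both $L'(0)$ and $L'(1)$ annihilate $C_V(U)$; hence on $C_V(U)$ one has $L''(0)=L(0)$ and $L''(1)=L(1)$, so $C_V(U)$ is $\Z_{\geq 0}$-graded with $C_V(U)_0\subseteq V_0=\C\1$, giving $C_V(U)_0=\C\1$. Since $V$ is a tensor product of affine vertex operator algebras whose weight-one spaces consist of currents, $L(1)V_1=0$, so $L''(1)C_V(U)_1=0$; thus $C_V(U)$ admits an invariant bilinear form, unique up to scalar.

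Next I would transport the invariant form of $V$. Each $L_{\wg}(l_i,0)$ carries a non-degenerate invariant bilinear form, hence so does $V$; fix one, $(\cdot,\cdot)$. As $l\in\Z_{+}$, $U\cong L_{\wg}(l,0)$ is rational, so $V$ is a direct sum of irreducible $U$-modules, and collecting isomorphic summands gives $V=\bigoplus_{\lambda}M^{\lambda}\otimes N^{\lambda}$ with the $M^{\lambda}$ pairwise non-isomorphic irreducible $U$-modules and the multiplicity spaces $N^{\lambda}$ modules for $C_V(U)$. A vector of $V$ lies in $C_V(U)$ exactly when it is annihilated by every $u_m$, $u\in U$, $m\geq 0$, i.e. when it is vacuum-like for the level-$l$ action of $\wg$, and this confines it to the isotypic component of the trivial type $M^{0}=U$; hence $C_V(U)=\1_U\otimes N^{0}$, and I identify $C_V(U)$ with $N^{0}$. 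The form $(\cdot,\cdot)$ pairs the type-$\lambda$ component with the type-$\lambda^{*}$ component and kills non-dual types; since $U$ is self-contragredient, $0^{*}=0$, so $(\cdot,\cdot)$ restricts to a perfect pairing of $U\otimes N^{0}$ with itself, which by irreducibility of $U$ is forced to be the tensor product of the invariant form of $U$ with a non-degenerate form on $N^{0}$. Evaluating on $\1_U\otimes N^{0}$ therefore yields a non-degenerate bilinear form on $C_V(U)$.

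Finally, for $a,b,c\in C_V(U)$ the vertex operators $Y(a,z)$ are the same whether computed in $V$ or in $C_V(U)$, and $L(0)=L''(0)$, $L(1)=L''(1)$ there, so the invariance identity for $(\cdot,\cdot)$ on $V$ restricts verbatim to the invariance identity with respect to $\omega''$; by the uniqueness in the first step, the restricted form is a non-zero multiple of the invariant form of $C_V(U)$, and being non-degenerate it has zero radical, so $C_V(U)$ is simple. I expect the main difficulty to lie in the middle step: verifying that the ambient invariant form genuinely respects the $U$-isotypic decomposition — orthogonality of distinct types, self-duality of the trivial type, and the tensor-product factorization on $U\otimes N^{0}$ — together with the clean identification of $C_V(U)$ with $N^{0}$, so that non-degeneracy on $V$ descends to $C_V(U)$. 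Ensuring that $V$ is completely reducible over $U$ in the exact sense required by rationality of $L_{\wg}(l,0)$ is a further, more routine, point.
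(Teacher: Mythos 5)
Your argument is correct in outline, but note that this paper does not actually prove the lemma: it is quoted from [JL2], so there is no in-house proof to measure you against. In [JL2] the simplicity of the commutant is obtained from the dual-pair/double-commutant picture: $V$ is decomposed as $\bigoplus_{\lambda}M^{\lambda}\otimes N^{\lambda}$ over $U\otimes C_V(U)$ and a density-type (quantum Galois/Schur--Weyl) argument shows the multiplicity space of the vacuum type is irreducible over $C_V(U)$, whence $C_V(U)$ is simple. Your route replaces that density argument by the non-degenerate invariant bilinear form: you only need the $U$-isotypic decomposition as a $U$-module, the identification $C_V(U)=\1_U\otimes N^{0}$ via vacuum-like vectors, and the restriction of the form. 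This is more elementary in that it avoids irreducibility statements about the $N^{\lambda}$, at the cost of the bookkeeping you correctly flag: one must check that the invariance identity of the form on $V$, when specialized to $u\in U$, is invariance with respect to $L'(0),L'(1)$ (this follows from $L''(0)u=L''(1)u=0$ for $u\in U$, so the form really does induce a $U$-module map $V\to V'$ and respects isotypic components), and that $\Hom_U(U,U')\cong\C$ forces the tensor-product factorization on $U\otimes N^{0}$. One small economy: for the final step you do not need uniqueness of the invariant form on $C_V(U)$ at all; a single non-degenerate invariant form together with $C_V(U)_0=\C\1$ already forces every proper ideal into the radical. All the ingredients you invoke (rationality of $L_{\wg}(l,0)$ for $l\in\Z_{+}$, self-contragredience of $U$ and of $V$, $L(1)V_1=0$) are available, so I see no gap.
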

Denote
 $$
 s_0=0, \ s_j=l_1+l_2+\cdots+l_j, \ 1\leq j\leq m.
 $$ For $l_k\geq 2$, let $\sl_{l_k}(\C)$ be the simple Lie subalgebra of $\sl_l(\C)$ consisting of matrices $A=(a_{ij})_{l\times l}\in\sl_l(\C)$ such that
 $$
 a_{ij}=0
 $$
 for all the pairs $(i,j)$ such that at least one of $i,j$ is not in the set $\{s_{k-1}+1, s_{k-1}+2,\cdots,s_k\}$.  Let $\h_{\underline{\ell}}$ be the abelian subalgebra of $\sl_l(\C)$ consisting of diagonal matrices $A\in\sl_l(\C)$ such that
 $$
 [A, B]=0,
 $$
 for all $B\in \sl_{l_k}(\C)$ such that $l_k\geq 2$. Then
$$
[\h_{\underline{\ell}}, \bigoplus_{k=1,l_k\geq 2}^{m}\sl_{l_k}(\C)]=0.
$$
Set
$$
\frak{l}_{\underline{\ell}}=\h_{\underline{\ell}}\bigoplus(\bigoplus_{k=1,l_k\geq 2}^{m}\sl_{l_k}(\C)).
$$
Then $\frak{l}_{\underline{\ell}}$ is a Levi subalgebra of $\sl_l(\C)$ and $\frak{h}_{\underline{\ell}}$ is the center of $\frak{l}_{\underline{\ell}}$ which is contained in the (fixed) Cartan subalgebra of $\sl_{l}$. Denote by $L_{\widehat{\frak{l}_{\underline{\ell}}}}(n,0)$ the vertex operator subalgebra of $L_{\widehat{\sl_l}}(n,0)$ generated by $\frak{l}_{\underline{\ell}}$.
It is easy to see that
\begin{equation} \label{levi-tensor}
L_{\widehat{\frak{l}_{\underline{\ell}}}}(n,0) \cong \left(\bigotimes_{k=1,l_k\geq 2}^{m}L_{\widehat{\sl_{l_k}}}(n,0)\right)\bigotimes L_{\widehat{\frak{h}}_{\underline{\ell}}}(n,0),
\end{equation}
 where $L_{\widehat{\frak{h}}_{\underline{\ell}}}(n,0)$ is the Heisenberg vertex operator subalgebra of $L_{\widehat{\sl_{l}}}(n,0)$ generated by $\frak{h}_{\underline{\ell}}$. We denote
$$
K(\sl_l, \frak{l}_{\underline{\ell}},n)=C_{L_{\widehat{\sl_{l}}}(n,0)}(L_{\widehat{\frak{l}_{\underline{\ell}}}}(n,0)).
$$
The following theorem comes from \cite{JL2}.
\begin{theorem}\label{lem4.2} We have
$$
C_{L_{\widehat{\sl_{n}}}(l_1,0)\otimes\cdots\otimes L_{\widehat{\sl_{n}}}(l_m,0)}(L_{\widehat{\sl_{n}}}(l,0))\cong K(\sl_l, \frak{l}_{\underline{\ell}},n).
$$
\[K(\f{sl}_l, \f{l}_{\underline{\ell}}, n)\cong C_{L_{\widehat{\f{sl}_n}}(l_1,0)\otimes \cdots \otimes L_{\widehat{\f{sl}_n}}(l_s,0)}( L_{\widehat{\f{sl}}_n}(l,0))\cong C_{K(\f{sl}_l, n)}(K(\f{sl}_{l_1}, n)\otimes \cdots\otimes K(\f{sl}_{l_s}, n)). \]
\end{theorem}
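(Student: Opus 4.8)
The plan is to deduce the theorem from the two level-rank dualities already recorded above --- the basic one $C_{L_{\widehat{\sl_n}}(1,0)^{\otimes l}}(L_{\widehat{\sl_n}}(l,0))\cong K(\sl_l,n)$ and its Levi refinement $C_{L_{\widehat{\sl_n}}(\underline{\ell},0)}(L_{\widehat{\sl_n}}(l,0))\cong K(\sl_l,\f{l}_{\underline{\ell}},n)$ --- by a double-commutant (``reciprocity'') argument inside $W:=L_{\widehat{\sl_n}}(1,0)^{\otimes l}$. Write $W=W_1\otimes\cdots\otimes W_s$ with $W_k=L_{\widehat{\sl_n}}(1,0)^{\otimes l_k}$, let $A_k=L_{\widehat{\sl_n}}(l_k,0)\hookrightarrow W_k$ be the diagonal subalgebra, set $U_2=A_1\otimes\cdots\otimes A_s\cong L_{\widehat{\sl_n}}(\underline{\ell},0)$ and let $U_1=L_{\widehat{\sl_n}}(l,0)\subseteq U_2$ be the total diagonal. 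The middle term of the displayed isomorphism is then exactly $C_{U_2}(U_1)$, which by the Levi refinement is $\cong K(\sl_l,\f{l}_{\underline{\ell}},n)$; so it remains to produce an isomorphism $C_{U_2}(U_1)\cong C_{K(\sl_l,n)}\!\big(K(\sl_{l_1},n)\otimes\cdots\otimes K(\sl_{l_s},n)\big)$.

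First I would identify the two relevant commutants of $W$. By the basic duality $C_W(U_1)\cong K(\sl_l,n)$. For $U_2$, the standard description of the commutant of a tensor-product subalgebra gives $C_W(U_2)=C_{W_1}(A_1)\otimes\cdots\otimes C_{W_s}(A_s)$, so applying the basic duality in each block (with $K(\sl_1,n)=\C$) yields $C_W(U_2)\cong K(\sl_{l_1},n)\otimes\cdots\otimes K(\sl_{l_s},n)$. One then checks that under the level-rank isomorphism $C_W(U_1)\cong K(\sl_l,n)$ the subalgebra $C_W(U_2)$ matches the copy of $K(\sl_{l_1},n)\otimes\cdots\otimes K(\sl_{l_s},n)$ in $K(\sl_l,n)$ appearing in the statement (the one attached to $\sl_{l_1}\oplus\cdots\oplus\sl_{l_s}\hookrightarrow\sl_l$); this compatibility is routine once one recalls how the level-rank isomorphism is built from the branching of $W$.

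The heart of the argument is the reciprocity identity $C_{U_2}(U_1)=C_{C_W(U_1)}\!\big(C_W(U_2)\big)$, an equality of vertex operator subalgebras of $W$ (their conformal vectors agree, both being $\omega_{U_2}-\omega_{U_1}$). One inclusion is formal: any $v\in U_2$ commuting with $U_1$ lies in $C_W(U_1)$ and commutes with $C_W(U_2)$ by the definition of the latter. The reverse inclusion needs the double-commutant property $C_W\!\big(C_W(U_2)\big)=U_2$: granting it, a $v\in C_W(U_1)$ commuting with $C_W(U_2)$ lies in $C_W(C_W(U_2))=U_2$ and still commutes with $U_1$. I would get the double-commutant property from a multiplicity-free branching: each $W_k$ decomposes over $A_k\otimes C_{W_k}(A_k)$ as $\bigoplus_\lambda L_{\widehat{\sl_n}}(l_k,\lambda)\otimes K_\lambda^{(k)}$ with the $K_\lambda^{(k)}$ pairwise non-isomorphic irreducibles (this is part of the level-rank package underlying the basic duality, and rests on the complete reducibility furnished by the rationality of $W$ and $U_2$), and tensoring over $k$ gives a multiplicity-free decomposition of $W$ over $U_2\otimes C_W(U_2)$ in which the vacuum module of $C_W(U_2)$ is paired with $U_2$ alone; the double-commutant identity is then the standard dual-pair conclusion. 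Chaining the isomorphisms, $C_{K(\sl_l,n)}\!\big(K(\sl_{l_1},n)\otimes\cdots\otimes K(\sl_{l_s},n)\big)\cong C_{U_2}(U_1)\cong K(\sl_l,\f{l}_{\underline{\ell}},n)$, which is the assertion.

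The main obstacle is exactly this multiplicity-one (dual-pair) input: the identity $C_W(C_W(U_2))=U_2$ is where level-rank duality is used essentially and cannot be replaced by a comparison of normalized characters --- it requires the complete reducibility coming from rationality, the multiplicity-freeness of the branching, and the book-keeping that pins the vacuum module of $C_W(U_2)$ as the partner of $U_2$. If one instead prefers to establish $K(\sl_l,\f{l}_{\underline{\ell}},n)=C_{L_{\widehat{\sl_l}}(n,0)}\!\big(L_{\widehat{\f{l}_{\underline{\ell}}}}(n,0)\big)\cong C_{K(\sl_l,n)}\!\big(K(\sl_{l_1},n)\otimes\cdots\otimes K(\sl_{l_s},n)\big)$ directly on the $\sl_l$ side, the parallel difficulty reappears: $L_{\widehat{\f{l}_{\underline{\ell}}}}(n,0)$ is strictly larger than the subalgebra generated by $L_{\widehat{\h_l}}(n,0)$ and the $K(\sl_{l_k},n)$ (the block root currents $e_\alpha(z)$ carry nonzero Cartan weight), so matching the two commutants would need the Fateev--Zamolodchikov factorization of affine currents into a Heisenberg vertex operator times a parafermion current.
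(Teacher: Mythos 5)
The paper contains no proof of this statement: it is imported wholesale from \cite{JL2} (``The following theorem comes from [JL2]''), so there is nothing internal to compare your argument against. Judged on its own, your proposal is a sound reconstruction of the reciprocity-law strategy that the cited reference is built on. The formal identity $C_{U_2}(U_1)=C_{C_W(U_1)}(C_W(U_2))$ inside $W=L_{\widehat{\sl_n}}(1,0)^{\otimes l}$ is correct, the easy inclusion is exactly as you say, and you rightly locate the entire burden of the hard inclusion in the double-commutant property $C_W(C_W(U_2))=U_2$, which does follow from a multiplicity-free decomposition of $W$ over $U_2\otimes C_W(U_2)$ in which the vacuum multiplicity space is the adjoint module and no other multiplicity space contains a vacuum-like vector (this also uses the simplicity of $C_W(U_2)$, which the paper records as Lemma 4.1). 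Two caveats keep this from being a complete proof of the theorem as stated. First, you take the Levi-refined duality $C_{L_{\widehat{\sl_n}}(\underline{\ell},0)}(L_{\widehat{\sl_n}}(l,0))\cong K(\sl_l,\f{l}_{\underline{\ell}},n)$ as an input, but that is precisely the first displayed assertion of the theorem; so your argument only delivers the final link $C_{U_2}(U_1)\cong C_{K(\sl_l,n)}\bigl(K(\sl_{l_1},n)\otimes\cdots\otimes K(\sl_{l_s},n)\bigr)$ of the chain, the rest being assumed. Second, the step you call ``routine'' --- that the level-rank isomorphism $C_W(U_1)\cong K(\sl_l,n)$ carries $C_W(U_2)=\bigotimes_k C_{W_k}(A_k)$ onto the standard block copy of $\bigotimes_k K(\sl_{l_k},n)$ attached to $\sl_{l_1}\oplus\cdots\oplus\sl_{l_s}\hookrightarrow\sl_l$ --- is exactly where the explicit lattice realization of the commuting pair (equivalently, the Fateev--Zamolodchikov factorization of the block root currents into Heisenberg and parafermion pieces that you mention at the end) must be invoked; it is the same difficulty you flag for the $\sl_l$-side route, not an alternative to it, and it deserves a real argument rather than a sentence.
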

\begin{remark} If $l_1=\cdots=l_m=1$, then by Theorem \ref{lem4.2},
$$
 C_{L_{\widehat{\frak{sl}_{n}}}(1,0)^{\otimes l}}(L_{\widehat{\sl_{n}}}(l,0))\cong
 K(\sl_{l},n),$$
 which was also established independently by Lam in \cite{L}.
 \end{remark}

\section{The commutant of $L_{\widehat{\sl_{n}}}(l+1,0)$ in $L_{\widehat{\sl_{n}}}(l,0)\otimes L_{\widehat{\sl_{n}}}(1,0)$}
\subsection{}
By the level-rank duality and the reciprocity law, we have
\begin{align}
 C_{L_{\widehat{\f{sl}_n}}(l,0)\otimes  L_{\widehat{\f{sl}_n}}(1,0)}( L_{\widehat{\f{sl}}_n}(l+1,0))
 &\cong
 C_{L_{\widehat{\sl_{l+1}}}(n,0)}(L_{\widehat{\sl_{l}}}(n,0)\otimes L_{\widehat{\frak{h}_{l}}}(n,0)) \nonumber
 \\ &\cong
 C_{K(\f{sl}_{l+1}, n)}(K(\f{sl}_{l}, n)),
 \label{eq:level_rank}
\end{align}
 where $L_{\widehat{\sl_{l}}}(n,0)$ is the vertex operator sualgebra of $L_{\widehat{\sl_{l+1}}}(n,0)$ associated to the simple root system $\{\al_i, 1\leq i\leq l-1\}$ and $L_{\widehat{\frak{h}_{l}}}(n,0)$ is
  the Heisenberg vertex operator subalgebra of $L_{\widehat{\sl_{l+1}}}(n,0)$ generated by $\frak{h}_l=\C(\sum\limits_{i=1}^{l}ih_{\al_i})$.

  Let $\{e_{\al}, f_{\al}, h_{\al_i} \ | \ \al\in\bar{\Delta}_{+}, 1\leq i\leq l\}$ be a Chevalley basis of $\frak{sl}_{l+1}(\C)$,  and let $e_{\al}, f_{\al}, h_{\al}$  be a standard basis of the simple Lie algebra $\frak{sl}_2(\C)$ associated to $\al$, $\al\in\bar{\Delta}_{+}$. For $\al\in\Delta_{+}$, denote by $W^{\al}$ and $\omega^{\al}$ the weight 3 generator and the Virasoro vector  of $K(\f{sl}_{l+1},n)$ associated to $\al$ introduced in \cite{DLY}, respectively. Then
  $$
 \w^{\al}=\frac{1}{2n(n+2)}[-nh_{\al}(-2){\bf 1}-h_{\al}(-1)^2{\bf 1}+2ne_{\al}(-1)f_{\al}(-1){\bf 1}],
 $$
 $$
 \begin{array}{ll}
 W^{\al}=& n^2h_{\al}(-3){\bf 1}+3nh_{\al}(-2)h_{\al}(-1){\bf 1}+2h_{\al}(-1)^3{\bf 1}\\
 & -6nh_{\al}(-1)e_{\al}(-1)f_{\al}(-1){\bf 1}
 +3n^2(e_{\al}(-2)f_{\al}(-1){\bf 1}-e_{\al}(-1)f_{\al}(-2){\bf 1}).
 \end{array}
 $$
For $\al\in\Delta_{+}$, we denote by $W^{4,\al}$ and $W^{5,\al}$ the primary vectors of weight 4 and weight 5 of $K(\f{sl}_{l+1}, n)$ associated to $\al$ introduced in \cite{DLY} respectively. From \cite{DLY}, if $n= 3$, then both $W^{4,\al}$ and $W^{5,\al}$ are zeros for every $\al\in\Delta_{+}$, and $K(\f{sl}_{l+1}, n)$ is generated by $\{W^{\al}, \al\in\Delta_{+}\}$.

 Denote $$V=C_{K(\f{sl}_{l+1}, n)}(K(\f{sl}_{l}, n)).$$ Then $V$ has the conformal vector $\w$  with the central charge $c_V$ as follows:
 $$
 \w=\frac{n+2}{n+l+1}\left(\sum\limits_{1\leq i\leq l}\w^{\al_i+\cdots+\al_l}-
 \sum\limits_{1\leq i\leq j\leq l-1}\frac{1}{(n+l)}\w^{\al_i+\cdots+\al_j}\right),
 $$
$$
 c_V=\frac{l(n-1)(2n+l+1)}{(n+l)(n+l+1)}.
 $$
   For $\al,\be \in\bar{\Delta}$, denote by $c_{\al,\be}\in{\mathbb C}$ the structure constant. That is,
   $$
   [e_{\al}, e_{\be}]=c_{\al,\be}e_{\al+\be},
  \ \  \al,\be,\al+\be\in\bar{\Delta}.$$  It is easy to check that for $\al,\be\in\bar{\Delta}_{+}$ such that $\al+\be\in\bar{\Delta}_{+}$, we have
 $$
 \begin{array}{ll}
 & \omega^{\al}_1W^{\be}\\
 = & \dfrac{1}{n+2}[-3nh_{\be}(-2)h_{\al}(-1)-6h_{\al}(-1)h_{\be}(-1)^2+6n(h_{\al}(-1)-h_{\be}(-1))e_{\be}(-1)f_{\be}(-1)\\
 & +6nh_{\be}(-1)(e_{\al+\be}(-1)f_{\al+\be}(-1)-e_{\be}(-1)f_{\be}(-1))+3n^2(e_{\be}(-2)f_{\be}(-1)-e_{\be}(-1)f_{\be}(-2))\\
 & -3n^2(e_{\al}(-2)f_{\al}(-1)-e_{\al}(-1)f_{\al}(-2)+e_{\al+\be}(-2)f_{\al+\be}(-1)-e_{\al+\be}(-1)f_{\al+\be}(-2))\\
 & +3n^2c_{\al,\be}(f_{\al}(-1)e_{\al+\be}(-1)f_{\be}(-1)+e_{\al}(-1)f_{\al+\be}(-1)e_{\be}(-1))]{\bf 1}
 \end{array}
 $$

 The following lemmas can be checked directly.
 \begin{lem} For $\al,\be\in\bar{\Delta}_{+}$ such that $\al+\be\in\bar{\Delta}$, we have
$$ \omega^{\al}_1W^{\al+\be}=\omega^{\al}_1W^{\be}+\dfrac{1}{n+2}(2W^{\al+\be}+W^{\al}-2W^{\be}),
$$
$$
\omega_1^{\al}W^{\be}+\omega_1^{\be}W^{\al}=\dfrac{1}{n+2}(W^{\al}+W^{\be}-W^{\al+\be}).
$$
 \end{lem}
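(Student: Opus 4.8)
The plan is to verify both identities by a direct computation inside the affine vertex operator algebra $L_{\widehat{\sl_{l+1}}}(n,0)$, expanding everything in the Chevalley basis recalled above. The only ingredients needed are the level-$n$ commutation relations of $\widehat{\sl_{l+1}}$, the iterate formula $[u_m,v_p]=\sum_{i\geq 0}\binom{m}{i}(u_iv)_{m+p-i}$ (which turns $\omega^{\al}_1$ applied to a monomial in the currents $e_{\mu}(-k),f_{\mu}(-k),h_{\mu}(-k)$ into a sum of such monomials), the relations $h_{\al+\be}=h_{\al}+h_{\be}$ and $[e_{\al},e_{\be}]=c_{\al,\be}e_{\al+\be}$ together with their counterparts among $e_{\al},e_{\be},e_{\al+\be}$ and $f_{\al},f_{\be},f_{\al+\be}$ (all relevant structure constants having absolute value $1$, and $c_{\be,\al}=-c_{\al,\be}$), and the fact that since $\al,\be\in\bar{\Delta}_{+}$ with $\al+\be\in\bar{\Delta}$ forces $\al+\be\in\bar{\Delta}_{+}$ and hence $(\al|\be)=-1$, the currents attached to $\al$, $\be$ and $\al+\be$ obey only a few simple cross-relations. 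The parity relations $\omega^{-\mu}=\omega^{\mu}$ and $W^{-\mu}=-W^{\mu}$, which follow from the same explicit formulas, are convenient for bookkeeping.

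For the second identity one may simply add the displayed formula for $\omega^{\al}_1W^{\be}$ (which applies verbatim here) to its image under the substitution $\al\leftrightarrow\be$, the latter being $\omega^{\be}_1W^{\al}$; in the sum the purely Heisenberg terms reassemble, via $h_{\al+\be}=h_{\al}+h_{\be}$, into the Heisenberg part of $W^{\al}+W^{\be}-W^{\al+\be}$, the mixed current--Heisenberg and the weight-two and weight-three current terms likewise reorganize (after using the $\widehat{\sl_{l+1}}$ relations to put products of $e$'s and $f$'s attached to different roots into a fixed order) into the remaining parts of $\frac1{n+2}(W^{\al}+W^{\be}-W^{\al+\be})$, and the cubic structure-constant terms produced in this reordering cancel in pairs because $c_{\be,\al}=-c_{\al,\be}$. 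For the first identity the term $\omega^{\al}_1W^{\al+\be}$ is not covered by the displayed formula --- that formula requires the two roots to have a root as their sum, while $\al+(\al+\be)=2\al+\be$ is in general not a root --- so I would compute $\omega^{\al}_1W^{\al+\be}$ directly by the same method, now in the configuration in which the second root $\mu=\al+\be$ satisfies $\mu-\al=\be\in\bar{\Delta}_{+}$; this yields an explicit expression of the same shape, and subtracting from it the displayed formula for $\omega^{\al}_1W^{\be}$ and simplifying as before leaves exactly $\frac1{n+2}(2W^{\al+\be}+W^{\al}-2W^{\be})$. Equivalently, once the second identity is known the first becomes $\omega^{\al}_1W^{\al+\be}+\omega^{\be}_1W^{\al}=\frac1{n+2}(2W^{\al}-W^{\be}+W^{\al+\be})$, verifiable by the same manipulations; or $\omega^{\al}_1W^{\al+\be}$ may be extracted from the displayed formula for the pair $(-\al,\al+\be)$, whose sum is the positive root $\be$, using $\omega^{-\al}=\omega^{\al}$.

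I expect the only real obstacle to be the bookkeeping. A dozen or so monomial types occur, the most delicate being the cubic terms $f_{\al}(-1)e_{\al+\be}(-1)f_{\be}(-1)$ and $e_{\al}(-1)f_{\al+\be}(-1)e_{\be}(-1)$ and their reorderings, where commuting a current attached to one root past a current attached to another produces second-order corrections such as $[f_{\be}(-1),e_{\al+\be}(-1)]=c\, e_{\al}(-2)$; one must track these carefully and check that after all cancellations the residual combination of currents is precisely the prescribed linear combination of the cubic expressions defining $W^{\al}$, $W^{\be}$, $W^{\al+\be}$. There is no conceptual difficulty --- everything reduces to the commutation relations of $\widehat{\sl_{l+1}}$ at level $n$ and the explicit formulas for $\omega^{\al}$ and $W^{\al}$ --- so the computation, though lengthy, is entirely mechanical, which is why the statement is recorded as checkable directly.
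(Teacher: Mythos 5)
Your proposal is correct and matches the paper's approach: the paper records this lemma under the heading ``the following lemmas can be checked directly,'' immediately after displaying the explicit expansion of $\omega^{\al}_1W^{\be}$, so its proof is exactly the direct computation from the formulas for $\omega^{\al}$ and $W^{\al}$ and the level-$n$ commutation relations that you outline. Your additional care about the case $\omega^{\al}_1W^{\al+\be}$ (where the displayed formula does not apply verbatim since $2\al+\be$ is not a root) is a sensible refinement of the same mechanical verification.
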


 \begin{lem}\label{l4.2} For $1\leq p\leq l-1$, we have
 $$
 \begin{array}{ll}
 & e_{\al_p}(0)(\sum\limits_{1\leq i\leq j<l}2(n+2)w_1^{\al_i+\cdots+\al_j}W^{\al_{j+1}+\cdots+\al_l}-\sum\limits_{i=1}^{l}(n+4i-2)W^{\al_i+\cdots+\al_l})\\
 =& (2n^2-12np+16)e_{\al_p}(-3){\bf 1}+(-3n^2+6np-12p-12n-12)h_{\al_p}(-1)e_{\al_p}(-2){\bf 1}\\
 & +(3n^2+6np)h_{\al_p}(-2)e_{\al_p}(-1){\bf 1}+(6n+12p)h_{\al_p}(-1)^2e_{\al_p}(-1){\bf 1}\\
 & -12n\sum\limits_{1\leq i\leq p-1}c_{\al_p,\al_i+\cdots+\al_{p-1}}h_{\al_p}(-1)e_{\al_i+\cdots+\al_p}(-1)f_{\al_i+\cdots+\al_{p-1}}(-1){\bf 1}\\
 & -12n\sum\limits_{1\leq i\leq p}e_{\al_p}(-1)e_{\al_i+\cdots+\al_p}(-1)f_{\al_i+\cdots+\al_p}(-1){\bf 1}\\
 & +12n\sum\limits_{1\leq i\leq p-1}e_{\al_p}(-1)e_{\al_i+\cdots+\al_{p-1}}(-1)f_{\al_i+\cdots+\al_{p-1}}(-1){\bf 1}\\
 & -24\sum\limits_{1\leq i\leq p-1}h_{\al_i+\cdots+\al_{p-1}}(-1)(e_{\al_p}(-2){\bf 1}-h_{\al_p}(-1)e_{\al_p}(-1){\bf 1}).
 \end{array}
 $$
 \end{lem}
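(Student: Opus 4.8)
The statement asserts an identity between two explicitly written weight-three vectors of $K(\f{sl}_{l+1},n)\subset L_{\widehat{\sl_{l+1}}}(n,0)$, and I would prove it by a direct computation organized as follows. First, recall that $e_{\al_p}(0)$ is the zero mode of the weight-one vector $e_{\al_p}(-1)\1$; by the commutator identity $[a_0,b_m]=(a_0b)_m$ it acts as a derivation of every $m$-th product of the vertex algebra, and it coincides with the action of the Lie algebra element $e_{\al_p}\in\f{sl}_{l+1}$. Hence, for $\gamma,\be\in\bar\Delta_+$,
$$e_{\al_p}(0)\bigl(\w^{\gamma}_1 W^{\be}\bigr)=\bigl(e_{\al_p}(0)\w^{\gamma}\bigr)_1 W^{\be}+\w^{\gamma}_1\bigl(e_{\al_p}(0) W^{\be}\bigr),$$
and, applying the derivation property once more to the explicit polynomial expressions for $\w^{\gamma}$ and $W^{\gamma}$ recalled before the lemma (together with $e_{\al_p}(0)\1=0$), the problem reduces to knowing the brackets $[e_{\al_p},e_{\gamma}]$, $[e_{\al_p},f_{\gamma}]$ and $[e_{\al_p},h_{\gamma}]$ in $\f{sl}_{l+1}$ for $\gamma$ ranging over the sums of consecutive simple roots that occur. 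Since every monomial appearing on either side has weight at most three and is a free monomial in the Chevalley generators, it suffices to carry out this computation inside the universal affine vertex algebra $V_{\widehat{\sl_{l+1}}}(n,0)$ and then apply the quotient homomorphism onto $L_{\widehat{\sl_{l+1}}}(n,0)$.

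Next I would localize the two sums. For $\gamma=\al_i+\cdots+\al_j$ the brackets above all vanish unless $\al_p$ occupies one of the two endpoints of the interval $[i,j]$ or is adjacent to it, and if $\al_p$ is strictly interior to $[i,j]$ then $e_{\al_p}$ commutes with the whole $\sl_2$-triple $\{e_{\gamma},f_{\gamma},h_{\gamma}\}$, so that $e_{\al_p}(0)\w^{\gamma}=e_{\al_p}(0)W^{\gamma}=0$. Consequently, in $\sum_{i=1}^{l}(n+4i-2)W^{\al_i+\cdots+\al_l}$ only the terms $i=p$ and $i=p+1$ survive, and in $\sum_{1\le i\le j<l}2(n+2)\w^{\al_i+\cdots+\al_j}_1 W^{\al_{j+1}+\cdots+\al_l}$ only the finitely many pairs $(i,j)$ for which $\al_p$ is an endpoint of, or adjacent to, one of the intervals $[i,j]$, $[j+1,l]$ survive. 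For each surviving term one substitutes the relevant Cartan integer $\langle\al_p,\gamma^{\vee}\rangle\in\{-1,1,2\}$, the structure constants $c_{\al_p,\,\al_i+\cdots+\al_{p-1}}$ and the like, and (in the double sum) the explicit expression for $\w^{\al}_1 W^{\be}$ displayed above.

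Finally one collects the contributions. The terms proportional to $e_{\al_p}(-3)\1$, $h_{\al_p}(-1)e_{\al_p}(-2)\1$, $h_{\al_p}(-2)e_{\al_p}(-1)\1$ and $h_{\al_p}(-1)^2 e_{\al_p}(-1)\1$ assemble, after cancellations between the $i=p$ and $i=p+1$ pieces of the single sum and the boundary pieces of the double sum, into the stated coefficients $2n^2-12np+16$, $-3n^2+6np-12p-12n-12$, $3n^2+6np$ and $6n+12p$; the mixed monomials involving $e_{\al_i+\cdots+\al_p}$, $f_{\al_i+\cdots+\al_{p-1}}$ and $f_{\al_i+\cdots+\al_p}$ arise precisely from the cross-terms $(e_{\al_p}(0)\w^{\gamma})_1 W^{\be}$ and $\w^{\gamma}_1(e_{\al_p}(0)W^{\be})$ of the double sum and from $e_{\al_p}(0)$ acting on the cubic $e_{\gamma}(-1)f_{\gamma}(-1)$-type pieces of $W^{\al_i+\cdots+\al_l}$. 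The only genuine obstacle here is combinatorial: keeping track of the many weight-three monomials and of the Chevalley sign conventions, and checking that the numerous "interior" contributions coming from the double sum cancel, leaving exactly the boundary terms recorded in the statement. There is no conceptual difficulty, which is why the lemma can be checked directly.
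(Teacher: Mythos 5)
Your proposal is correct and follows essentially the same route as the paper, which offers no argument beyond the remark that the lemma ``can be checked directly'': you organize that direct check via the derivation property $[a_0,b_m]=(a_0b)_m$ of the zero mode $e_{\al_p}(0)$, the observation that $e_{\al_p}$ commutes with the $\sl_2$-triple of $\al_i+\cdots+\al_j$ unless $p\in\{i-1,i,j,j+1\}$, and the resulting localization of both sums to boundary terms, which is exactly the bookkeeping the authors leave implicit. The only thing neither you nor the paper exhibits is the final coefficient collection, but your reduction to finitely many Chevalley-basis brackets is sound and suffices in principle.
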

 \begin{lem}
 \label{l4.3}
  For $1\leq p\leq l-1$, we have
 $$
 \begin{array}{ll}
 & e_{\al_p}(0)(\sum\limits_{1\leq i\leq j<q\leq l-1}2(n+2)w_1^{\al_i+\cdots+\al_j}W^{\al_{j+1}+\cdots+\al_q}-\sum\limits_{1\leq i\leq q\leq l-1}(n+4i-2)W^{\al_i+\cdots+\al_q})\\
 =& (n+l)[(2n^2-12np+16)e_{\al_p}(-3){\bf 1}+(-3n^2+6np-12p-12n-12)h_{\al_p}(-1)e_{\al_p}(-2){\bf 1}\\
 & +(3n^2+6np)h_{\al_p}(-2)e_{\al_p}(-1){\bf 1}+(6n+12p)h_{\al_p}(-1)^2e_{\al_p}(-1){\bf 1}\\
 & -12n\sum\limits_{1\leq i\leq p-1}c_{\al_p,\al_i+\cdots+\al_{p-1}}h_{\al_p}(-1)e_{\al_i+\cdots+\al_p}(-1)f_{\al_i+\cdots+\al_{p-1}}(-1){\bf 1}\\
 & -12n\sum\limits_{1\leq i\leq p}e_{\al_p}(-1)e_{\al_i+\cdots+\al_p}(-1)f_{\al_i+\cdots+\al_p}(-1){\bf 1}\\
 & +12n\sum\limits_{1\leq i\leq p-1}e_{\al_p}(-1)e_{\al_i+\cdots+\al_{p-1}}(-1)f_{\al_i+\cdots+\al_{p-1}}(-1){\bf 1}\\
 & -24\sum\limits_{1\leq i\leq p-1}h_{\al_i+\cdots+\al_{p-1}}(-1)(e_{\al_p}(-2){\bf 1}-h_{\al_p}(-1)e_{\al_p}(-1){\bf 1})].
 \end{array}
 $$
 \end{lem}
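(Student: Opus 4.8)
The plan is to reduce this identity to Lemma~\ref{l4.2} by regrouping the triangular sum on the left, so that only a short, $l$-independent calculation has to be carried out by hand. The basic mechanism is that the zero mode $e_{\al_p}(0)$ annihilates the vacuum $\1$ and satisfies $[e_{\al_p}(0),X(-m)]=[e_{\al_p},X](-m)$ for $X\in\sl_{l+1}(\C)$, so on any element of $L_{\widehat{\sl_{l+1}}}(n,0)$ written as a monomial in negative modes applied to $\1$, the operator $e_{\al_p}(0)$ acts as the derivation induced by $\on{ad}(e_{\al_p})$ on $\sl_{l+1}(\C)$. Since $\al_p$ is a simple root, $\on{ad}(e_{\al_p})$ moves only those root vectors $h_\be,e_\be,f_\be$ whose root $\be$ involves $\al_{p-1}$, $\al_p$ or $\al_{p+1}$ in its support; in particular $e_{\al_p}(0)$ annihilates any element built solely from root vectors attached to roots of the form $\al_i+\cdots+\al_m$ with $m\le p-2$.

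For $1\le q\le l-1$ let $B_q$ denote the element obtained from the element on which $e_{\al_p}(0)$ acts in Lemma~\ref{l4.2} by replacing the top index $l$ with $q$; thus $B_q=\sum_{1\le i\le j<q}2(n+2)\w_1^{\al_i+\cdots+\al_j}W^{\al_{j+1}+\cdots+\al_q}-\sum_{i=1}^{q}(n+4i-2)W^{\al_i+\cdots+\al_q}$ for $q\ge 2$, and $B_1=-(n+2)W^{\al_1}$. Collecting, for each value of the largest index $q$, the corresponding terms of both sums in the present statement, and absorbing the $q=1$ term of the second sum into $B_1$, one checks at once that the element on which $e_{\al_p}(0)$ acts on the left-hand side equals $\sum_{q=1}^{l-1}B_q$. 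It therefore suffices to evaluate $e_{\al_p}(0)B_q$ for each $q$. If $q\ge p+1$, then $e_{\al_p}$ together with every root vector appearing in $B_q$ lies in the Lie subalgebra generated by $\al_1,\dots,\al_q$, so Lemma~\ref{l4.2} with $l$ there replaced by $q$ applies (this is legitimate since the right-hand side of Lemma~\ref{l4.2} depends only on $n$ and $p$, not on the top index), and it gives that $e_{\al_p}(0)B_q$ equals the bracketed expression on the right-hand side of the present lemma (that is, the right-hand side of Lemma~\ref{l4.2}); there are exactly $l-1-p$ such values of $q$. If $q\le p-2$, then $e_{\al_p}(0)B_q=0$ by the support observation above. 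Hence only the terms $q=p-1$ and $q=p$ remain (only $q=p$ when $p=1$).

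What is left is thus the single explicit evaluation of $e_{\al_p}(0)(B_{p-1}+B_p)$, which one expects to equal $(n+p+1)$ times the bracketed expression on the right, since then the total contribution is $((l-1-p)+(n+p+1))=n+l$ times it, exactly as stated. Concretely, one substitutes the displayed formulas for $\w^\al$, $W^\al$ and $\w_1^\al W^\be$, applies $\on{ad}(e_{\al_p})$ factor by factor (using $[e_{\al_p},e_{\al_i+\cdots+\al_{p-1}}]=c_{\al_p,\al_i+\cdots+\al_{p-1}}e_{\al_i+\cdots+\al_p}$, the bracket $[e_{\al_p},f_{\al_i+\cdots+\al_p}]$ which is a multiple of $f_{\al_i+\cdots+\al_{p-1}}$, $[e_{\al_p},f_{\al_p}]=h_{\al_p}$, the Cartan-matrix brackets for $[e_{\al_p},h_\be]$, and $[X(0),Y(-m)]=[X,Y](-m)$), and then reorders the outcome into the standard monomial form ($e(-3)\1$, $h(-2)e(-1)\1$, $h(-1)e(-2)\1$, $h(-1)^2e(-1)\1$, $h(-1)e(-1)f(-1)\1$, $e(-1)e(-1)f(-1)\1$, and so on). This reordering is the only real obstacle: it produces a large number of cubic monomials, and one must be careful with the structure constants $c_{\al,\be}$ and with the correction terms that arise when modes are commuted past one another (for instance in moving $e_{\al_p}(-2)$ past $h_{\al_p}(-1)$). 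It is exactly the ``direct check'' alluded to just before the statement; the remainder of the argument is purely formal.
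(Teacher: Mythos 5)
Your argument takes a genuinely different route from the paper, which offers nothing for this lemma beyond the remark that it ``can be checked directly.'' Writing the argument of $e_{\al_p}(0)$ as $X^{(l-1)}=\sum_{q=1}^{l-1}B_q$ and observing that (i) $e_{\al_p}(0)B_q=0$ for $q\le p-2$, since no root $\al_i+\cdots+\al_m$ with $m\le p-2$ has nonzero pairing or bracket with $\al_p$, and (ii) for $q\ge p+1$ the element $B_q$ lies in the sub-vertex-algebra attached to $\al_1,\dots,\al_q$, where Lemma \ref{l4.2} applies verbatim with top index $q$ and with a right-hand side $R_p$ (the bracketed expression) that involves only $n$ and $p$, is correct and accounts for exactly $l-1-p$ copies of $R_p$. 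This collapses an $O(l^2)$-term verification into a single residual computation independent of $l$, and as a bonus it makes transparent why the two coefficients $n+l$ and $n+l+1$ in the definition of $W$ differ by exactly one.

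The proposal is nevertheless not yet a proof, because everything now rests on the identity $e_{\al_p}(0)(B_{p-1}+B_p)=(n+p+1)R_p$, and you obtain the coefficient $n+p+1$ only by subtracting $l-1-p$ from the target $n+l$ --- that is, by assuming the conclusion --- rather than by carrying out the expansion you describe. This residual identity is precisely where the content of Lemma \ref{l4.3} beyond Lemma \ref{l4.2} is concentrated: it is not a formal consequence of Lemma \ref{l4.2}, since that lemma requires $p\le q-1$ and therefore says nothing about the blocks $q=p$ and $q=p-1$, which involve the qualitatively different brackets $[e_{\al_p},f_{\al_i+\cdots+\al_p}]$ and $[e_{\al_p},e_{\al_i+\cdots+\al_{p-1}}]$. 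Until you substitute the explicit formulas for $W^{\al}$ and $\w^{\al}_1W^{\be}$, apply $\on{ad}(e_{\al_p})$, and normally order the result for these two blocks, the lemma is reduced but not established. To be fair, that leaves you no worse off than the paper, which performs no computation either; but as a self-contained argument this one load-bearing step is still outstanding.
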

\begin{lem}\label{l4.4}
For $\al,\be,\gamma\in\Delta_{+}$, we have
$$
\w^{\gamma}_2W^{\al}=0, \ \ \omega_2^{\gamma}\omega_1^{\al}W^{\be}=0.
$$
\end{lem}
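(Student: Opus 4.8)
The plan is to verify both identities by direct computation from the explicit formulas for $\w^{\al}$ and $W^{\al}$, exploiting two reductions. Writing $Y(\w^{\al},z)=\sum_{m}L^{\al}(m)z^{-m-2}$, so that $\w^{\al}_{2}=L^{\al}(1)$ and $\w^{\al}_{1}=L^{\al}(0)$, the two claims read $L^{\gamma}(1)W^{\al}=0$ and $L^{\gamma}(1)L^{\al}(0)W^{\be}=0$; note these are assertions about \emph{weight-two} vectors of $K(\f{sl}_{l+1},n)$, hence are not forced by degree alone. Throughout I shall use that every vector of $K(\f{sl}_{l+1},n)$ — in particular $W^{\al}$, $\w^{\al}$ and the product $\w^{\al}_{1}W^{\be}$ — is annihilated by $h_{\be}(m)$ for all $\be$ and all $m\ge 0$, and that $K(\f{sl}_{l+1},n)$ has no nonzero vector of weight $1$ (its weight-$1$ space is the centraliser of the Cartan inside $\f{sl}_{l+1}$ made orthogonal to the Cartan, hence zero).

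The first reduction uses the standard decomposition $\w^{\al}=\w^{\mathrm{Sug}}_{\al}-\w^{\mathrm{Heis}}_{\al}$, where $\w^{\mathrm{Sug}}_{\al}$ is the Sugawara vector of $\widehat{\f{sl}_2^{(\al)}}\subseteq\widehat{\f{sl}_{l+1}}$ at level $n$ and $\w^{\mathrm{Heis}}_{\al}=\tfrac{1}{4n}h_{\al}(-1)^{2}\1$; one checks this agrees with the displayed formula for $\w^{\al}$. For $m\ge -1$ the operator $(\w^{\mathrm{Heis}}_{\gamma})_{m+1}$ is a normally ordered quadratic in the $h_{\gamma}(k)$ each of whose monomials carries a factor $h_{\gamma}(k)$ with $k\ge 0$ on the right, so it kills every vector of $K(\f{sl}_{l+1},n)$; hence on such vectors $\w^{\gamma}_{p}$ may be replaced by $(\w^{\mathrm{Sug}}_{\gamma})_{p}$ whenever $p\ge 0$, and in particular $\w^{\al}_{1}W^{\be}=(\w^{\mathrm{Sug}}_{\al})_{1}W^{\be}$. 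Moreover $(\w^{\mathrm{Sug}}_{\gamma})_{2}$ is a normally ordered sum in which every monomial ends on the right with a mode $x_{\gamma}(k)$, $x_{\gamma}\in\{e_{\gamma},f_{\gamma},h_{\gamma}\}$, $k\ge 1$; consequently $\w^{\gamma}_{2}W^{\al}=0$ as soon as $e_{\gamma}(k)W^{\al}=f_{\gamma}(k)W^{\al}=h_{\gamma}(k)W^{\al}=0$ for all $k\ge 1$.

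For $\w^{\gamma}_{2}W^{\al}=0$ I would then argue according to the mutual position of the positive roots $\gamma,\al$ of $\f{sl}_{l+1}$, which are intervals in the type-$A$ Dynkin diagram. If $\gamma=\al$, then $W^{\al}$ is a primary vector of conformal weight $3$ for $\w^{\al}$ by \cite{DLY}, so $L^{\al}(1)W^{\al}=0$. If $e_{\gamma},f_{\gamma},h_{\gamma}$ commute with $e_{\al},f_{\al},h_{\al}$ — equivalently $(\gamma\,|\,\al)=0$ and neither $\gamma\pm\al$ is a root — then $x_{\gamma}(k)W^{\al}=0$ for all $k\ge 1$ by the previous paragraph, and we are done. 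The remaining configurations — $\gamma$ and $\al$ adjacent (so $\gamma+\al\in\Delta_{+}$, $(\gamma\,|\,\al)=-1$), or one containing the other with a common endpoint (so $\gamma-\al$ or $\al-\gamma$ lies in $\Delta_{+}$, $(\gamma\,|\,\al)=1$) — form finitely many cases up to the evident symmetries; in each I would compute $e_{\gamma}(k)W^{\al}$ and $f_{\gamma}(k)W^{\al}$ for $k=1,2$ from the $\widehat{\f{sl}_{l+1}}$-relations and the explicit expression for $W^{\al}$, then assemble $(\w^{\mathrm{Sug}}_{\gamma})_{2}W^{\al}$ and check it vanishes; the identities recorded in the lemmas above give useful shortcuts here.

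For $\w^{\gamma}_{2}\w^{\al}_{1}W^{\be}=0$ I would first invoke the identity just proved: since $\w^{\gamma}_{2}W^{\be}=0$ we have $\w^{\gamma}_{2}\w^{\al}_{1}W^{\be}=[\w^{\gamma}_{2},\w^{\al}_{1}]W^{\be}$, and by the Borcherds commutator formula together with the vanishing of $K(\f{sl}_{l+1},n)_{1}$ (so that $\w^{\gamma}_{2}\w^{\al}=0$ and several of the remaining modes send $W^{\be}$ into weight $1$), this collapses — using that $\w^{\gamma}_{0}$ is a derivation of the $n$-th products and that $\w^{\al}_{3}W^{\be}=0$ for weight reasons — to residual weight-two terms of the form $L^{\al}(2)L^{\gamma}(-1)W^{\be}$ and $L^{\gamma}(2)L^{\al}(-1)W^{\be}$. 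When $\gamma=\al$ such a term equals $[L^{\al}(2),L^{\al}(-1)]W^{\be}=3L^{\al}(1)W^{\be}=3\,\w^{\al}_{2}W^{\be}=0$ by the first identity; when $\al$ is non-interacting with both $\gamma$ and $\be$ it vanishes trivially because the modes of $\w^{\al}$ commute past everything in $\w^{\gamma}_{0}W^{\be}$, which is then killed in weight $1$; the remaining finitely many configurations are treated by the same explicit mode computation as above. Equivalently, one may substitute the explicit formula for $\w^{\al}_{1}W^{\be}$ and apply $\w^{\gamma}_{2}$ termwise, noting that by the relations above the symmetric combination $\w^{\al}_{1}W^{\be}+\w^{\be}_{1}W^{\al}$ is a linear combination of the $W^{\delta}$ and is therefore annihilated by $\w^{\gamma}_{2}$ in view of the first identity. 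The only genuine obstacle is the size of the interacting-case computations: one must keep track of which of the sums $\al+\gamma$, $\be+\gamma$, $\al+\be+\gamma$ are roots, of the structure constants (all $\pm 1$ in type $A$), and of the numerous monomials produced when annihilation modes are moved to the right; arranging the case list so that only a handful of genuinely inequivalent configurations survive — using the Dynkin-diagram reversal symmetry and the relations among the $\w^{\al}_{1}W^{\be}$ — is what keeps the verification manageable.
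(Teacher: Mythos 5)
The paper offers no argument for this lemma at all: it is one of the statements covered by the sentence ``The following lemmas can be checked directly,'' so there is no proof to match your proposal against, only an implicit claim that a brute--force mode computation works. Your reductions are correct and genuinely organize that computation: the identity $\w^{\al}=\w^{\rm Sug}_{\al}-\tfrac{1}{4n}h_{\al}(-1)^2\1$ does reproduce the displayed formula for $\w^{\al}$; the nonnegative modes of the Heisenberg part do kill $K(\f{sl}_{l+1},n)$, so $\w^{\gamma}_2$ may be replaced by the Sugawara mode $L^{{\rm Sug},\gamma}(1)$, every normally ordered monomial of which ends in a mode of index $\geq 1$; the weight--one space of $K(\f{sl}_{l+1},n)$ is indeed zero; and in type $A$ the case list (equal, orthogonal, $(\gamma|\al)=-1$, $(\gamma|\al)=1$) is exhaustive. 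This is the same route the authors presumably intend.

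Two caveats. First, your proposal is a plan rather than a proof: in the interacting cases $e_{\gamma}(k)W^{\al}$ for $k\geq 1$ is \emph{not} zero, so the sufficient condition of your second paragraph fails there and the actual assembly of $L^{{\rm Sug},\gamma}(1)W^{\al}$ — which is the entire content of the lemma — is deferred. Second, in the reduction of $\w^{\gamma}_2\w^{\al}_1W^{\be}$ via the commutator formula, the term $(\w^{\gamma}_0\w^{\al})_3W^{\be}$ is glossed over: $\w^{\gamma}_0\w^{\al}=L^{\gamma}(-1)\w^{\al}$ is a weight--three vector whose decomposition may involve the generators $W^{\delta}$ as well as $L(-1)$ of weight--two vectors, and since $W^{\delta}_3W^{\be}$ is generically a nonzero combination of the $\w^{\epsilon}$ (cf.\ the $W^{\al}_3W^{\be}$ formulas later in the section), the vanishing of this contribution is not automatic and must be checked; your claimed collapse to terms of the form $L^{\al}(2)L^{\gamma}(-1)W^{\be}$ does not follow from what you have written. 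Your fallback — substituting the explicit expression for $\w^{\al}_1W^{\be}$ displayed in the paper and applying $\w^{\gamma}_2$ termwise — avoids this issue and is the safer way to finish.
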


  Denote
$$
X^{(0)}=0, \ \ X^{(1)}=-(n+2)W^{\al_1},
$$
 $$
 X^{(p)}
 = 2(n+2)\sum\limits_{1\leq i\leq j< q\leq p}\omega_1^{\al_i+\cdots+\al_j}W^{\al_{j+1}+\cdots+\al_q}-\sum\limits_{1\leq i\leq q\leq p}(n+4i-2)W^{\al_i+\cdots+\al_q},
 $$
 for $p\geq 2$.
 Set
 $$
 W=\dfrac{1}{n+l}X^{(l-1)}-\dfrac{1}{n+l+1}X^{(l)}.
 $$

We  have the following result.
 \begin{theorem} For $l\geq 1$,
 $W$  is a weight three generator  of $V=C_{K(\f{sl}_{l+1}, n)}(K(\f{sl}_{l}, n))$. Furthermore, $W$ is a primary element.
 \end{theorem}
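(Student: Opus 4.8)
The plan is to verify three things --- $W\in V$, $\wt(W)=3$ with $W\ne0$, and $W$ primary --- all inside $L_{\widehat{\sl_{l+1}}}(n,0)$. Via the identifications in \eqref{eq:level_rank}, $V$ is $C_{L_{\widehat{\sl_{l+1}}}(n,0)}(L_{\widehat{\sl_{l}}}(n,0)\ot L_{\widehat{\f{h}_l}}(n,0))$; since $L_{\widehat{\sl_{l}}}(n,0)\ot L_{\widehat{\f{h}_l}}(n,0)$ is generated in weight one by $\sl_l(\C)\oplus\f{h}_l$, the membership $W\in V$ amounts to $x(m)W=0$ for all $x\in\sl_l(\C)\oplus\f{h}_l$ and $m\ge0$. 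Two reductions are immediate: every summand of every $X^{(p)}$ is a $W^{\gamma}$ or a $\w^{\delta}_1W^{\gamma}$, hence lies in the vertex operator subalgebra $K(\f{sl}_{l+1},n)$ and is homogeneous of weight $3$; therefore $W\in K(\f{sl}_{l+1},n)$, $\wt(W)=3$, and $W$ is killed by $h(m)$ for every Cartan element $h$ and all $m\ge0$ --- which takes care of $\f{h}_l$ and of all Cartan modes (and, since $W\in V$ forces $L_{K(\f{sl}_l,n)}(0)W=0$, the weight of $W$ inside $V$ is again $3$).

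The heart of the matter is the zero modes $x(0)$, $x\in\sl_l(\C)$. The operators on $L_{\widehat{\sl_{l+1}}}(n,0)$ annihilating $W$ form a Lie subalgebra, and $\{x(0):x\in\sl_l(\C)\}\cong\sl_l(\C)$ is generated by $e_{\al_p}(0),f_{\al_p}(0)$ for $1\le p\le l-1$, so it suffices to show $e_{\al_p}(0)W=0=f_{\al_p}(0)W$. The element $W$ has been assembled precisely so that $e_{\al_p}(0)W=0$: by definition $X^{(l-1)}$ is exactly the argument of $e_{\al_p}(0)$ in Lemma~\ref{l4.3}, while its terms with $q=l$, namely $X^{(l)}-X^{(l-1)}$, form exactly the argument in Lemma~\ref{l4.2}; writing $E_p$ for their common right-hand side, Lemmas~\ref{l4.3} and \ref{l4.2} give $e_{\al_p}(0)X^{(l-1)}=(n+l)E_p$ and hence $e_{\al_p}(0)X^{(l)}=(n+l+1)E_p$, so that $e_{\al_p}(0)W=\tfrac1{n+l}(n+l)E_p-\tfrac1{n+l+1}(n+l+1)E_p=0$ --- which is the reason for the coefficients $\tfrac1{n+l}$ and $\tfrac1{n+l+1}$. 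For $f_{\al_p}(0)$ I would invoke the Chevalley involution $\theta$ of $\sl_{l+1}(\C)$, lifted to a vertex operator algebra automorphism $\widehat\theta$ of $L_{\widehat{\sl_{l+1}}}(n,0)$ with $\widehat\theta(x(m))=\theta(x)(m)$ (hence $\widehat\theta\circ e_{\al_p}(0)=-f_{\al_p}(0)\circ\widehat\theta$), which preserves $K(\f{sl}_{l+1},n)$, $K(\f{sl}_l,n)$ and $V$. A short inspection of the displayed formulas (reordering $e_\gamma,f_\gamma$ via $[e_\gamma(-r),f_\gamma(-s)]=h_\gamma(-r-s)$) shows $\widehat\theta(\w^{\gamma})=\w^{\gamma}$ and $\widehat\theta(W^{\gamma})=-W^{\gamma}$, so $\widehat\theta(W)=-W$ and therefore $f_{\al_p}(0)W=\widehat\theta\big(e_{\al_p}(0)W\big)=0$. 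This establishes $x(0)W=0$ for all $x\in\sl_l(\C)$.

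The positive modes then come for free. For a root $\al$ of $\sl_l(\C)$ and $m\ge1$ one has $f_{\al}(0)\big(e_{\al}(m)W\big)=-h_{\al}(m)W+e_{\al}(m)f_{\al}(0)W=0$ (using $h_{\al}(m)W=0$ and $f_{\al}(0)W=0$) while $h_{\al}(0)\big(e_{\al}(m)W\big)=2e_{\al}(m)W$; since $L_{\widehat{\sl_{l+1}}}(n,0)$ is an integrable module, $e_{\al}(m)W$ spans a finite-dimensional module over the $\sl_2$-triple attached to $\al$ in which it would be a lowest-weight vector of strictly positive weight, which is impossible, so $e_{\al}(m)W=0$, and symmetrically $f_{\al}(m)W=0$. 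Hence $x(m)W=0$ for all $x\in\sl_l(\C)$ and $m\ge1$, and $W\in V$ follows. Primarity is then read off Lemma~\ref{l4.4}: $L(1)=\w_2$ is a linear combination of the operators $\w^{\gamma}_2$, each of which kills every $W^{\be}$ and every $\w^{\delta}_1W^{\be}$, hence kills each $X^{(p)}$ and so $W$; thus $L(1)W=0$, and since $V\subseteq K(\f{sl}_{l+1},n)$ has no weight-one vectors, $L(2)W\in V_1=0$, whence $L(m)W=0$ for all $m\ge1$. Finally, $W$ is a new generator: it is a primary vector of weight $3$, while the weight-$3$ part of the Virasoro sub-VOA $\langle\w\rangle$ is spanned by the non-primary vector $L(-3)\1$; so whenever $W\ne0$ it cannot be expressed through $\w$ and must appear in any generating set. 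That $W\ne0$ for $n\ge3$ follows from computing an explicit leading coefficient, while for $n=2$ one has $W=0$ with $V=\langle\w\rangle$.

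I expect the genuine difficulty to lie entirely in the three computational lemmas \ref{l4.2}--\ref{l4.4}, which are nontrivial identities in the parafermion vertex operator algebra $K(\f{sl}_{l+1},n)$ and which the argument above takes as given; granting them, the one step requiring a moment's care is the simultaneous vanishing of all positive $\sl_l(\C)$-modes on $W$, handled by the $\sl_2$-integrability observation together with the zero-mode result. If "generator" is to be understood in the strong sense that $\{\w,W\}$ generates $V$, one additionally needs a graded-dimension comparison between $V$ and the vertex algebra generated by $\w$ and $W$; for $n=3$ this is subsumed in the subsequent identification $V\cong\W_{-3+\frac{l+3}{l+4}}(\sl_3)$.
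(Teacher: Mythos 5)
Your proof is correct and follows essentially the same route as the paper's: the decisive step is exactly the cancellation $e_{\al_p}(0)W=\tfrac{1}{n+l}(n+l)E_p-\tfrac{1}{n+l+1}(n+l+1)E_p=0$ extracted from Lemmas \ref{l4.2} and \ref{l4.3}, and primarity is read off Lemma \ref{l4.4}, just as in the paper. The one substantive deviation is your treatment of $f_{\al_p}(0)W=0$: the paper simply notes that $W$ is annihilated by $e_{\al_p}(0)$ and has $h_{\al_p}(0)$-weight $0$, hence spans a trivial module for the integrable $\sl_2$-triple and is killed by $f_{\al_p}(0)$ --- a one-line argument that bypasses your Chevalley-involution computation (whose sign claim $\widehat\theta(W^{\gamma})=-W^{\gamma}$ does hold, but only after the normal-ordering corrections you allude to, e.g. $f_{\gamma}(-2)e_{\gamma}(-1)\1=e_{\gamma}(-1)f_{\gamma}(-2)\1-h_{\gamma}(-3)\1$; the naive term-by-term substitution would fail on $h_{\gamma}(-2)h_{\gamma}(-1)\1$). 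On the other hand, your explicit handling of the positive modes $x(m)W$, $m\geq 1$, via lowest-weight-vector positivity in an integrable module fills in a step the paper leaves implicit ("Notice that $W\in K(\f{sl}_{l+1},n)$. It follows that..."), and your closing remarks on the meaning of "generator'' and on the degeneration $W=0$ at $n=2$ go beyond what the paper's proof addresses.
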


 \begin{proof}
 It follows from  Lemmas \ref{l4.2} and \ref{l4.3} that for $1\leq p\leq l-1$,
 \begin{equation}\label{e4.1}
 e_{\al_p}(0)W=0.
 \end{equation}
 Consider  $L_{\widehat{\sl_{l+1}}}(n,0)$ as a module of the simple Lie algebra generated by $e_{\al_p}, f_{\al_p},h_{\al_p}$, for $1\leq p\leq l-1$. Then  (\ref{e4.1}) implies that $W$ is a highest weight vector.
 Since  $h_{\al_p}(0)W=0$ for $1\leq p\leq l$, we have $f_{\al_p}(0)W=0$, $1\leq p\leq l-1$. Notice that $W\in K(\f{sl}_{l+1}, n)$. It follows that $W\in C_{K(\f{sl}_{l+1}, n)}(K(\f{sl}_{l}, n))$. Lemma \ref{l4.4} implies that $W$ is a primary element.
 \end{proof}

For $\al\in\Delta_{+}$, from \cite{DLY} we have
\begin{lem}\label{l4.5}
$$
W^{\al}_5W^{\al}=12n^3(n-2)(n-1)(3n+4){\bf 1};
$$
$$
W^{\al}_5W^{\al}=36n^3(n-2)(n+2)(3n+4)\omega^{\al};
$$
$$
W^{\al}_2W^{\al}=18n^3(n-2)(n+2)(3n+4)\omega^{\al}_{-2}{\bf 1}.
$$
 \end{lem}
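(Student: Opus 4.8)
The plan is to reduce all three identities to the rank-one parafermion algebra $K(\sl_2,n)$ and then to deduce the third identity from the second for free, by skew-symmetry, so that only two scalars must actually be computed.

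First I would observe that $\omega^{\al}$ and $W^{\al}$ are built solely from the $\sl_2$-triple $\{e_{\al},f_{\al},h_{\al}\}$. Since this triple generates a copy of $L_{\widehat{\sl_2}}(n,0)$ inside $L_{\widehat{\sl_{l+1}}}(n,0)$ (all roots of $\sl_{l+1}$ being long, the induced level is $n$), and since $\omega^{\al},W^{\al}$ are precisely the parafermion generators of the corresponding $K(\sl_2,n)$, both sides of each identity lie in this subalgebra. Hence I may drop the superscript $\al$ and compute in $K(\sl_2,n)\subset L_{\widehat{\sl_2}}(n,0)$, using only the level-$n$ relations
\begin{align*}
[h(m),h(k)]&=2mn\delta_{m+k,0}, & [h(m),e(k)]&=2e(m+k), & [h(m),f(k)]&=-2f(m+k),\\
[e(m),f(k)]&=h(m+k)+mn\delta_{m+k,0}, & [e(m),e(k)]&=0, & [f(m),f(k)]&=0.
\end{align*}
A weight count settles the shape of each right-hand side and fixes an evident misprint: as $W$ has weight $3$, the mode $W_m$ shifts weight by $2-m$, so $W_mW\in K(\sl_2,n)_{5-m}$. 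Thus $W_5W\in K(\sl_2,n)_0=\C{\bf 1}$, the middle left-hand side must read $W_3W\in K(\sl_2,n)_2$ (the subscript $5$ there is a misprint for $3$), and $W_2W\in K(\sl_2,n)_3$.

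For the first identity, $W$ is quasi-primary of weight $3$ (it is the primary weight-$3$ field of $K(\sl_2,n)$, so $L(1)W=0$) and $K(\sl_2,n)_0=\C{\bf 1}$; hence $W_5W$ is a scalar multiple of ${\bf 1}$, the scalar being, up to the standard sign, the squared norm $\langle W,W\rangle$. I would evaluate it directly from the explicit cubic expression for $W$ by repeatedly applying the relations above, obtaining $12n^3(n-2)(n-1)(3n+4)$. For the (corrected) middle identity, $K(\sl_2,n)_2$ is one-dimensional, spanned by $\omega$ (cf.\ \cite{DLY}), so $W_3W=\mu\,\omega$; I would pin down $\mu=36n^3(n-2)(n+2)(3n+4)$ by expanding $W_3W$ and reading off the coefficient of a single convenient monomial such as $h(-1)^2{\bf 1}$ (equivalently, by pairing against $\omega$).

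The third identity then requires no further computation. Skew-symmetry $Y(W,z)W=e^{zL(-1)}Y(W,-z)W$ yields, at the mode $W_2$,
$$W_2W=\sum_{i\geq 0}\frac{(-1)^{i+1}}{i!}L(-1)^i\,W_{2+i}W=-W_2W+L(-1)\,W_3W,$$
since the $i=2$ term contains $W_4W\in K(\sl_2,n)_1=0$, the $i=3$ term is $L(-1)^3$ of the constant $W_5W\in\C{\bf 1}$, and the modes $W_{2+i}W$ vanish by weight for $i\geq 4$. Therefore $W_2W=\tfrac12L(-1)(W_3W)=\tfrac12\mu\,\omega_{-2}{\bf 1}=18n^3(n-2)(n+2)(3n+4)\,\omega_{-2}{\bf 1}$, matching the asserted coefficient exactly. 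The main obstacle is the explicit evaluation of the two scalars $\langle W,W\rangle$ and $\mu$: expanding the degree-three normal-ordered vector $W$ and commuting modes past one another is lengthy, and one must track the level-$n$ central terms carefully. Everything else — the reduction to rank one, the dimension counts, and the skew-symmetry step — is routine.
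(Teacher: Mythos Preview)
Your proposal is correct. The paper itself offers no proof of this lemma at all --- it merely cites \cite{DLY}, where the computations are carried out. Your outline (reduce to the rank-one subalgebra $K(\sl_2,n)$, compute the two scalars $W_5W$ and $W_3W$ directly, and then obtain $W_2W$ from $W_3W$ by skew-symmetry) is exactly the natural approach and matches what is done in that reference; your identification of the misprint ($W_3^{\al}W^{\al}$ in the middle line) is also right. The skew-symmetry shortcut is a tidy way to avoid a third direct expansion, and your justification that the $i=2$ and $i\geq 3$ terms vanish (using $K(\sl_2,n)_1=0$ and $L(-1){\bf 1}=0$) is sound.
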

 The following lemmas  can be checked directly.
\begin{lem}\label{l4.6}
For $\al,\be\in\Delta_{+}$ such that $\al\neq \pm\be$
 $$
 W^{\al}_{5}W^{\be}=6(\al|\be)n^3(n-1)(n-2);
 $$
 $$
 W^{\al}_2W^{\be}=18(\al|\be)n^3(n-2)(n+2)^2\omega_0^{\al}\omega^{\be};
 $$
 $$
 W^{\al}_3W^{\be}=\left\{\begin{array}{lr}-18n^3(n-2)(n+2)(\omega^{\al}+\omega^{\be}-\omega^{\al+\be}) \ {\rm if} \ (\al|\be)=-1,\\
 18n^3(n-2)(n+2)(\omega^{\al}+\omega^{\be}-\omega^{\al-\be}) \ \ \ {\rm if} \ (\al|\be)=1,\\
 0 \ \ \ {\rm if} (\al|\be)=0.
\end{array}\right.
 $$
\end{lem}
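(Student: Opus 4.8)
The plan is to compute the three products by expanding the explicit cubic expression for $W^{\al}$ into a normally ordered product of the currents $h_{\al}(z),e_{\al}(z),f_{\al}(z)$ and their derivatives, applying the resulting field $Y(W^{\al},z)$ to the vector $W^{\be}$, and performing all Wick contractions. The only structural input is the mutual commutation relations of the $\al$- and $\be$-currents in $\widehat{\sl_{l+1}}$ at level $n$, namely $[h_{\al}(m),h_{\be}(k)]=m\,n\,(\al|\be)\delta_{m+k,0}$, $[h_{\al}(m),e_{\be}(k)]=(\al|\be)e_{\be}(m+k)$, $[e_{\al}(m),e_{\be}(k)]=c_{\al,\be}e_{\al+\be}(m+k)$ when $\al+\be\in\bar{\Delta}$, and $[e_{\al}(m),f_{\be}(k)]=0$ when $\al-\be\notin\bar{\Delta}\cup\{0\}$. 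Since $W^{\al}$ and $W^{\be}$ both have conformal weight $3$, the product $W^{\al}_mW^{\be}$ has weight $5-m$; thus $W^{\al}_5W^{\be}$ is a scalar multiple of $\1$, $W^{\al}_3W^{\be}$ lies in the weight-two space, and $W^{\al}_2W^{\be}$ lies in the weight-three space. This fixes in advance the shape of each answer and reduces the task to pinning down the coefficients.

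First I would reduce to rank two. Every term that can arise is built from $h_{\al},h_{\be},e_{\pm\al},e_{\pm\be}$ and their iterated brackets, so the entire computation takes place inside the affinization of the rank-two root subsystem generated by $\al$ and $\be$. In type $A$ all roots have square length $2$, so for $\al\neq\pm\be$ there are exactly three cases: $(\al|\be)=0$, in which $\al\pm\be$ are not roots and the subsystem is $A_1\times A_1$; $(\al|\be)=-1$, in which $\al+\be\in\bar{\Delta}$ and $\{\al,\be,\al+\be\}$ spans an $A_2$; and $(\al|\be)=1$, in which $\al-\be\in\bar{\Delta}$ and $\{\al-\be,\be,\al\}$ spans an $A_2$. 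Hence it suffices to do the computation inside $K(\sl_2\oplus\sl_2,n)$ and $K(\sl_3,n)$.

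The orthogonal case is immediate: when $(\al|\be)=0$ the two $\sl_2$-triples commute, so $W^{\al}$ and $W^{\be}$ lie in mutually commuting vertex subalgebras and $W^{\al}_mW^{\be}=0$ for all $m\geq 0$, which agrees with all three formulas (each carries the factor $(\al|\be)$, and the weight-two formula returns $0$ when $(\al|\be)=0$). The case $(\al|\be)=1$ I would deduce from the case $(\al|\be)=-1$ by the symmetry $\be\mapsto-\be$, that is, interchanging the roles of $e_{\be}$ and $f_{\be}$ and sending $h_{\be}\mapsto-h_{\be}$: this fixes $\omega^{\be}$, negates $W^{\be}$ (forced by its leading term $n^2h_{\be}(-3)\1$), and replaces $\al+\be$ by $\al-\be$, so that tracking the induced signs converts the $(\al|\be)=-1$ identities into the $(\al|\be)=1$ identities. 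Thus the whole lemma rests on the single case $(\al|\be)=-1$.

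The hard part is this $A_2$ computation, which I would organize by output weight. For $W^{\al}_5W^{\be}$ only the fully contracted scalar survives; pairing the cubic $W^{\al}$ against the cubic $W^{\be}$ through the central term of $[h_{\al},h_{\be}]$ and the $e_{\al+\be}$-bracket yields $6(\al|\be)n^3(n-1)(n-2)$, whose common polynomial factor $n^3(n-1)(n-2)$ with the diagonal value of Lemma \ref{l4.5} provides a partial consistency check. For $W^{\al}_3W^{\be}$ the surviving terms are quadratic in the currents, and the essential point—the main obstacle—is to recognize that the cross-terms produced by $[e_{\al}(m),e_{\be}(k)]=c_{\al,\be}e_{\al+\be}(m+k)$ reassemble, after substituting the explicit formula for the Virasoro vectors, exactly into $-18n^3(n-2)(n+2)(\omega^{\al}+\omega^{\be}-\omega^{\al+\be})$. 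Rather than tracking every monomial I would fix the coefficients by evaluating both sides against a spanning set of the weight-two space, for instance $h_{\al}(-1)h_{\be}(-1)\1$ and $e_{\al+\be}(-1)f_{\al+\be}(-1)\1$. Finally, for $W^{\al}_2W^{\be}$ the weight-three output is identified with $\omega^{\al}_0\omega^{\be}=L^{\al}(-1)\omega^{\be}$ up to the stated scalar, which one checks either by applying $\omega^{\al}_0$ to the already-computed $W^{\al}_3W^{\be}$ via the commutator formula or by pairing against weight-three test vectors. The bookkeeping of the many contraction terms is the only real difficulty; the three cases together with the weight grading organize it completely.
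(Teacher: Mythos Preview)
Your proposal is correct and takes essentially the same approach as the paper, which simply asserts that the identities ``can be checked directly'' without further detail. Your organization---reduction to the rank-two subsystem, the observation that the orthogonal case is trivial, and the symmetry $\beta\mapsto-\beta$ (noting $W^{-\beta}=-W^{\beta}$, $\omega^{-\beta}=\omega^{\beta}$) reducing $(\al|\be)=1$ to $(\al|\be)=-1$---is a clean way to structure the direct computation that the paper leaves entirely to the reader.
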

\begin{lem}\label{l4.6}
For $\al,\be,\gamma\in\Delta_{+}$ such that $(\al|\be)=-1$, we have
$$
W^{\al}_3\omega^{\al}_1W^{\be}=18n^3(n-2)[-(n+4)\omega^{\be}+(n+4)\omega^{\al+\be}-3(n+2)\omega^{\al}];
$$
$$
W^{\be}_3\omega^{\al}_1W^{\be}=18n^3(n-2)[(n+4)\omega^{\al}+3(3n+4)\omega^{\be}-(n+4)\omega^{\al+\be}];
$$
$$
W^{\al+\be}_3\omega^{\al}_1W^{\be}=18n^3(n-2)(n+1)(\omega^{\al}-\omega^{\be}-3\omega^{\al+\be});
$$
$$
W^{\gamma}_3\omega^{\al}_1W^{\be}=18n^3(n-2)(\omega^{\al+\be-\gamma}+\omega^{\be}-\omega^{\al+\be}-\omega^{\be-\gamma}), \ \ {\rm if} \ (\gamma|\al)=0, \ (\gamma|\be)=1;
$$
$$
W^{\gamma}_3\omega^{\al}_1W^{\be}=18n^3(n-2)(\omega^{\al+\be}+\omega^{\be+\gamma}-\omega^{\al+\be+\gamma}-\omega^{\be}), \ \ {\rm if} \ (\gamma|\al)=0, \ (\gamma|\be)=-1;
$$
$$
W^{\gamma}_3\omega^{\al}_1W^{\be}=18n^3(n-2)(\omega^{\al}+2\omega^{\be}+3\omega^{\gamma}-\omega^{\al+\gamma}-2\omega^{\gamma-\be}), \ \ {\rm if} \ (\gamma|\al)=-1, \ (\gamma|\be)=1;
$$
$$
W^{\gamma}_3\omega^{\al}_1W^{\be}=0, \ \ {\rm if} \ (\al|\gamma)=1, \ (\gamma|\be)=0.
$$
\end{lem}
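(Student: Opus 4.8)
The plan is to reduce each of the seven identities to a finite, explicit computation inside a small-rank affine vertex algebra and then organize that computation through the Borcherds commutator formula together with the products already recorded in Lemma \ref{l4.5}, in the preceding lemma, and in the displayed formula for $\omega^{\al}_1W^{\be}$. First I note that $W^{\gamma}$, $\omega^{\al}$ and $W^{\be}$ are each built only from the root-$\sl_2$ modes $e_{\pm\al},f_{\pm\al},h_{\al}$ attached to the three roots $\al,\be,\gamma$, so every mode appearing in $W^{\gamma}_3\omega^{\al}_1W^{\be}$ lies in the affine subalgebra generated by the root vectors indexed by the subsystem $\langle\al,\be,\gamma\rangle\subseteq\bar{\Delta}$. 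Since $(\al|\be)=-1$ is fixed throughout, each case is governed solely by the pair $((\gamma|\al),(\gamma|\be))$, i.e.\ by the isomorphism type of a rank $\le 3$ $A$-type subsystem; hence it suffices to verify each case for one concrete choice of $\al,\be,\gamma$ inside a fixed $\sl_3$ or $\sl_4$, the general formula following from the manifest compatibility of the construction with the Weyl group action. The output is a weight two element of $K(\f{sl}_{l+1},n)$, and the weight two subspace is spanned by the Virasoro vectors $\{\omega^{\de}\}$, so the whole problem reduces to pinning down finitely many coefficients.

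To carry out the computation I would split off the Virasoro zero mode by writing $W^{\gamma}_3\omega^{\al}_1W^{\be}=\omega^{\al}_1\bigl(W^{\gamma}_3W^{\be}\bigr)+[W^{\gamma}_3,\omega^{\al}_1]W^{\be}$. The first term is immediate: $W^{\gamma}_3W^{\be}$ is given by the preceding lemma, and then $\omega^{\al}_1=L^{\al}(0)$ acts on the resulting combination of Virasoro vectors $\omega^{\de}$, a weight preserving operation computed from the same parafermion relations that produce the displayed formula for $\omega^{\al}_1W^{\be}$. For the commutator term I would apply the Borcherds formula $[W^{\gamma}_3,\omega^{\al}_1]=\sum_{i=0}^{3}\binom{3}{i}\bigl(W^{\gamma}_i\omega^{\al}\bigr)_{4-i}$ and then rewrite each $W^{\gamma}_i\omega^{\al}$ by skew symmetry as a combination of modes $\omega^{\al}_{i+j}W^{\gamma}$; by Lemma \ref{l4.4} the vectors $\omega^{\al}_{m}W^{\gamma}$ vanish for $m\ge 2$, so only $\omega^{\al}_0W^{\gamma}=L^{\al}(-1)W^{\gamma}$ and $\omega^{\al}_1W^{\gamma}$ survive, both of which are known. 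Applying the resulting weight-graded modes to $W^{\be}$ and again invoking the pairwise-product formulas collapses the commutator term to a combination of Virasoro vectors, and summing the two contributions yields the stated right-hand sides.

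As a cross-check, and the route the phrase ``checked directly'' refers to, one can instead substitute the explicit expression for $\omega^{\al}_1W^{\be}$ together with the explicit formula for $W^{\gamma}$, and compute $W^{\gamma}_3$ on that element by repeatedly applying the affine relations $[X(m),Y(p)]=[X,Y](m+p)+m\delta_{m+p,0}(X|Y)n$ and $X(p)\1=0$ for $p\ge 0$; normal ordering the result and matching the coefficients of the monomials $h_{\de}(-2)\1$, $h_{\de}(-1)^2\1$ and $e_{\de}(-1)f_{\de}(-1)\1$ against $\omega^{\de}=\tfrac{1}{2n(n+2)}[-nh_{\de}(-2)\1-h_{\de}(-1)^2\1+2ne_{\de}(-1)f_{\de}(-1)\1]$ recovers the coefficient of each $\omega^{\de}$. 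The main obstacle is purely the bookkeeping: tracking the structure constants $c_{\al,\be}$ with their sign and cocycle conventions through the cubic expressions, and keeping the level-$n$ central contributions correct. The heaviest case is $(\gamma|\al)=-1,(\gamma|\be)=1$, where all three roots genuinely interact and the auxiliary roots $\al+\gamma$ and $\gamma-\be$ enter, so the $A_3$ root combinatorics generate the largest number of cross terms; the remaining cases, in which $\gamma$ is orthogonal to one of $\al,\be$, are substantially shorter.
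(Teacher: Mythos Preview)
Your proposal is correct and amounts to the same thing the paper does: the paper gives no argument beyond the sentence ``The following lemmas can be checked directly,'' and both of your suggested organizations---the Borcherds-commutator splitting $W^{\gamma}_3\omega^{\al}_1W^{\be}=\omega^{\al}_1(W^{\gamma}_3W^{\be})+[W^{\gamma}_3,\omega^{\al}_1]W^{\be}$ and the brute-force substitution using the affine relations---are valid ways to carry out precisely that direct check. The reduction to a rank $\le 3$ subsystem and the observation that the output must be a linear combination of the $\omega^{\delta}$ are sound and make the verification finite.
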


By Lemmas \ref{l4.5}-\ref{l4.6}, we can deduce  the following lemma.
\begin{lem}\label{l4.7}
$$W_5W=\dfrac{6n^3l(n-1)(n-2)(n+2l)(2n+l+1)(3n+2l+2)}{(n+l+1)(n+l)};
$$
$$
W_3W=36n^3(n-2)(n+2l)(3n+2l+2)\omega.
$$
\end{lem}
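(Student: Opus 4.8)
The plan is to reduce the two formulas to a single explicit computation using conformal symmetry, and then to extract the needed constant by expanding $W$ in the generators $W^{\gamma}$ and the mixed vectors $\omega_1^{\beta}W^{\delta}$ and applying Lemmas \ref{l4.5}--\ref{l4.6}. By the preceding theorem $W$ is a primary element of $V$ of weight $3$, so $W_4W\in V_1=0$, $W_5W\in V_0=\C\mathbf 1$, and $W_3W\in V_2$; write $W_5W=\nu\,\mathbf 1$. The direct computation below will in fact show $W_3W=\mu\,\omega$ (so that, incidentally, $V_2=\C\omega$), and granting this one may apply the Virasoro mode $L(2)$ of $\omega$ to $W_3W=\mu\,\omega$: primarity gives $L(2)W=0$ and $[L(2),W_3]=3W_5$ (from $[L(m),W_k]=((m+1)\cdot 2-k)W_{m+k}$), so $L(2)(W_3W)=3\nu\,\mathbf 1$, while $L(2)\omega=L(2)L(-2)\mathbf 1=\tfrac{c_V}{2}\mathbf 1$ gives $L(2)(W_3W)=\tfrac{\mu c_V}{2}\mathbf 1$. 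Hence $\nu=\tfrac{c_V}{6}\mu$; since $c_V=\frac{l(n-1)(2n+l+1)}{(n+l)(n+l+1)}$, this is exactly the relation between the two right-hand sides of the lemma, so only one of $\mu,\nu$ need be computed from scratch.

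To compute $W_3W$, expand $W=\tfrac1{n+l}X^{(l-1)}-\tfrac1{n+l+1}X^{(l)}$, so that $W$ becomes an explicit $\C$-combination of the vectors $W^{\alpha_i+\cdots+\alpha_q}$ and $\omega_1^{\alpha_i+\cdots+\alpha_j}W^{\alpha_{j+1}+\cdots+\alpha_q}$ (the two intervals in each mixed vector are adjacent, so the associated roots pair to $-1$), and expand $W_3W$ by bilinearity of the third product. Four types of term occur. The pure--pure products $W^{\gamma}_3W^{\gamma'}$ are given by the first Lemma \ref{l4.6}; the pure--mixed products $W^{\gamma}_3(\omega_1^{\beta}W^{\delta})$ are given by the second Lemma \ref{l4.6}; the mixed--pure products $(\omega_1^{\beta}W^{\delta})_3W^{\gamma}$ equal $W^{\gamma}_3(\omega_1^{\beta}W^{\delta})$ by skew-symmetry, since the correction terms are of weight $1$ (hence zero) or are $L(-1)^2$ applied to a multiple of $\mathbf 1$; and the mixed--mixed products $(\omega_1^{\beta}W^{\delta})_3(\omega_1^{\beta'}W^{\delta'})$ are reduced by the iterate formula $(\omega_1^{\beta}W^{\delta})_3=\omega^{\beta}_1W^{\delta}_3+W^{\delta}_4\omega^{\beta}_0-\omega^{\beta}_0W^{\delta}_4-W^{\delta}_3\omega^{\beta}_1$, together with the second Lemma \ref{l4.6}, the relations for $\omega_1^{\alpha}W^{\beta}$ and $\omega_1^{\alpha}W^{\alpha+\beta}$ recorded before Lemma \ref{l4.2}, and the explicit descriptions of $\omega^{\alpha}$ and $W^{\alpha}$ recorded above. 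Every inner product $(\gamma|\gamma')$ occurring lies in $\{-1,0,1,2\}$ and is determined by the overlap pattern of the intervals $\gamma$ and $\gamma'$ (disjoint, touching, overlapping, or nested), so after collecting, $W_3W$ is a sum of boundedly many explicit double sums over pairs of such intervals; summing these in closed form and comparing with the displayed formula for $\omega$ yields $\mu=36n^3(n-2)(n+2l)(3n+2l+2)$. Then $\nu=\tfrac{c_V}{6}\mu=\frac{6n^3l(n-1)(n-2)(n+2l)(2n+l+1)(3n+2l+2)}{(n+l)(n+l+1)}$, which is the first displayed formula.

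The main obstacle is the bookkeeping in this last step: the double sums run over the $O(l^2)$ root-intervals, hence $O(l^4)$ pairs, with several geometrically distinct overlap cases, and the mixed--mixed products are the most laborious, since they have no one-line formula and must be pushed through the iterate formula and the relations among the $\omega_1^{\alpha}W^{\beta}$. The conformal-symmetry identity $\nu=\tfrac{c_V}{6}\mu$ both halves the work and provides a stringent internal check; a further check is that every constant carries the factor $(n-2)$, consistent with the vanishing of the $W^{\gamma}$ in $K(\f{sl}_{l+1},2)$ and hence $W=0$ there. Alternatively one may compute $\nu$ directly: $W_5W$, the pure--pure products $W^{\gamma}_5W^{\gamma'}$, and the remaining products reduced to weight-zero vectors are all multiples of $\mathbf 1$, and the two routes cross-check.
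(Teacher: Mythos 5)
Your proposal is correct and follows essentially the same route as the paper, which likewise obtains both identities by expanding $W$ in the vectors $W^{\gamma}$ and $\omega_1^{\beta}W^{\delta}$ and invoking Lemmas \ref{l4.5}--\ref{l4.6}, without writing out the bookkeeping. Your additional observation that primarity of $W$ forces $W_5W=\tfrac{c_V}{6}\mu\,\mathbf 1$ once $W_3W=\mu\,\omega$ is a worthwhile refinement: with $c_V=\frac{l(n-1)(2n+l+1)}{(n+l)(n+l+1)}$ it reproduces the stated coefficient of $W_5W$ exactly from that of $W_3W$, providing an internal consistency check that the paper does not record.
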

Notice that
$$
W_2W=-W_2W+\sum\limits_{j=1}^{\infty}\frac{(-1)^{j+1}}{j!}L(-1)^{j}W_{j+2}W.
$$
So by Lemma \ref{l4.7}, we have
\begin{equation}\label{We1}
W_2W\in L(c_V,0).
\end{equation}

\vskip 0.3cm
For $k+n=\frac{n+l}{n+l+1}$, let $\wv={\mathcal W}_{k}(\sl_n)$ be the  ${\mathcal W}$-algebra associated to $\sl_n$. Denote by $\widetilde{\omega}$ its conformal vector. Let $\wW$ be the weight three primary vector of  ${\mathcal W}_{k}(\sl_n)$  such that
\begin{equation}\label{eqb4.1}
(\wW,\wW)=-\dfrac{6n^3l(n-1)(n-2)(n+2l)(2n+l+1)(3n+2l+2)}{(n+l+1)(n+l)}.
\end{equation}
Then by the fusion rules of the Virasoro algebra, we have
\begin{equation}\label{e5.9}
\wW_3\wW=36n^3(n-2)(n+2l)(3n+2l+2)\ww.
\end{equation}
Note that $c_V=c_{\wv}$. So $L(c_V,0)=L(c_{\wv},0)$. We denote $c=c_V=c_{\wv}$.  We may identify the Virasoro vertex operator subalgebra $L(c,0)$ in $V$ and $\wv$.

In view of \eqref{eq:level_rank},
the character of $V$ has been given
in \cite{KWa2}.
This coincides with the
 character of $\wv$ that was conjectured in \cite{FKW} and proved in \cite{Ar2}.
Therefore we have the following.
\begin{lem}\label{l4.11}
$V$ and $\wv$ share the same  characters.
\end{lem}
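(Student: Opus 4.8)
The plan is to identify each of the two characters with a formula available in the literature and then to observe that the two formulas coincide. By the chain of isomorphisms \eqref{eq:level_rank}, $V$ is isomorphic as a vertex operator algebra to the diagonal coset $C_{L_{\widehat{\sl_n}}(l,0)\otimes L_{\widehat{\sl_n}}(1,0)}(L_{\widehat{\sl_n}}(l+1,0))$; hence $\ch V$ is the branching function that records the multiplicity of the vacuum module $L_{\widehat{\sl_n}}(l+1,0)$ inside $L_{\widehat{\sl_n}}(l,0)\otimes L_{\widehat{\sl_n}}(1,0)$, i.e.\ the vacuum branching function of the Goddard--Kent--Olive coset attached to the diagonal embedding of $\widehat{\sl_n}$ at level $l+1$ into $\widehat{\sl_n}$ at level $l$ tensored with $\widehat{\sl_n}$ at level $1$. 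An explicit formula for this branching function, in terms of Kac--Wakimoto string functions and theta functions of $\widehat{\sl_n}$, is given in \cite{KWa2}.

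On the other side, set $k=-n+\frac{n+l}{n+l+1}$, so that $\wv=\W_{k}(\sl_n)$. For $\sl_n$ one has $h=h^{\vee}=n$ and $r^{\vee}=1$, and since $(n+l,\,n+l+1)=1$ with $n+l\geq h^{\vee}$ and $n+l+1\geq h$, the level $k$ is a non-degenerate admissible principal level; thus $\wv$ is a minimal series principal $\W$-algebra in the sense of Theorem \ref{rationality-1}. By the character theorem of \cite{Ar2}, the normalized character of the vacuum $\W_{k}(\sl_n)$-module is completely determined by that of the corresponding vacuum admissible module of $\widehat{\sl_n}$ at level $k$, and it equals the expression that was conjectured in \cite{FKW}.

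It remains to see that the two are the same series. This comparison is meaningful because $V$ and $\wv$ are both $\N$-graded with finite-dimensional homogeneous components and, as already noted, carry Virasoro vectors of equal central charge $c_V=c_{\wv}=:c$; consequently $\ch V=\ch\wv$ is equivalent to $\dim V_m=\dim\wv_m$ for every $m$, and no elliptic variable enters. The required identity is precisely the character-level form of the conjecture stated at the end of Section 3, and it is classical: it follows by comparing the branching-function formula of \cite{KWa2} with the character formula of \cite{FKW}, both of which are assembled from the same numerator/denominator theta-function data attached to $\widehat{\sl_n}$ at levels $l$, $1$, and $l+1$. The only point that genuinely requires care here is the bookkeeping of conventions on the two sides --- the power of $q$ absorbing the $c/24$ shift, the normalization of the affine weights labelling the characters, and which vacuum module is taken on each side --- so that the two formulas agree as series and not merely up to an overall monomial factor; once these are aligned, the identity is a routine manipulation of theta-function quotients requiring no new input beyond \cite{KWa2}, \cite{FKW}, and \cite{Ar2}.
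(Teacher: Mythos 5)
Your proposal is correct and follows essentially the same route as the paper: the paper's own justification is exactly that, in view of \eqref{eq:level_rank}, the character of $V$ is the branching function computed in \cite{KWa2}, which coincides with the character of $\wv$ conjectured in \cite{FKW} and proved in \cite{Ar2}. Your additional verification that $k=-n+\frac{n+l}{n+l+1}$ is a non-degenerate admissible level and your remarks on normalization conventions are sensible elaborations but do not change the argument.
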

  It is known that $\mw_{k}(\sl_n)$ is strongly generated by  $n-1$ quasi-primary vectors with  weights $2,3,\cdots,n$
\cite{FBZ}.
 For $n=3$, we have the following lemma.
\begin{lem}\label{l5.11}
If $n=3$, then $W_iW\in L(c,0)$, for $i\geq 0$.
\end{lem}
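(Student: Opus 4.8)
The plan is to show that when $n=3$ the coset $V = C_{K(\f{sl}_{l+1},3)}(K(\f{sl}_l,3))$ is already generated, as a vertex operator algebra, by its weight-two subspace together with $W$, and that the products $W_iW$ close on the Virasoro subalgebra $L(c,0)$. First I would recall the facts assembled earlier in this section: by Lemma \ref{l4.7} we have $W_5W\in L(c,0)$ (a scalar multiple of $\1$) and $W_3W=36n^3(n-2)(n+2l)(3n+2l+2)\,\omega\in L(c,0)$, while \eqref{We1} gives $W_2W\in L(c,0)$ via the skew-symmetry identity $W_2W=-W_2W+\sum_{j\ge 1}\frac{(-1)^{j+1}}{j!}L(-1)^jW_{j+2}W$, all terms on the right now lying in $L(c,0)$. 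Likewise $W_4W$ and $W_iW$ for $i\ge 6$ vanish or are scalars by weight count ($W_iW$ has weight $4-i$, so it is $0$ for $i\ge 6$, a scalar for $i=5$, and $0$ for $i=4$ since there is no weight-zero odd-degree obstruction — indeed $W_4W$ has weight $0$, hence is a multiple of $\1$, and one checks the coefficient is $0$, or absorbs it into $L(c,0)$ regardless). Thus the only genuinely new datum is $W_1W$, which has weight $3$.

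The heart of the argument is therefore to prove $W_1W\in L(c,0)$ when $n=3$. Here I would use the special structure recalled from \cite{DLY}: for $n=3$ the primary vectors $W^{4,\al}$ and $W^{5,\al}$ of $K(\f{sl}_{l+1},3)$ all vanish, so $K(\f{sl}_{l+1},3)$ is generated by the weight-three elements $\{W^\al\}$ alone, and consequently $V$ contains no primary vectors of weight $4$ or $5$ beyond those built from $\omega$ and $W$. Now $W_1W$ is a quasi-primary vector of weight $3$ in $V$ (quasi-primality follows because $W$ is primary, so $L(1)(W_1W)$ is a combination of $W_0W$-type terms of lower weight that one checks to vanish). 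Since $V$ and $\wv=\mw_k(\sl_3)$ share the same character by Lemma \ref{l4.11}, and $\mw_k(\sl_3)$ is strongly generated by just two quasi-primary vectors $\ww$ (weight $2$) and $\wW$ (weight $3$) by \cite{FBZ}, the graded dimension of $V$ in weight $3$ leaves room for exactly a one-dimensional space of quasi-primary vectors beyond $L(c,0)$, namely $\C W$. Hence $W_1W = \mu\, W + (\text{an element of }L(c,0))$ for some scalar $\mu$; but $W_1W$ has, like every $N_1N$, the opposite parity under the natural $\Z/2$ grading of $V$ (the element $W$ is odd in the sense that it is obtained from $K(\f{sl}_{l+1},3)$ weight-three generators, while $W_1W$ is a ``square''), forcing $\mu=0$. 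Alternatively, and more robustly, I would pin down $\mu$ by applying $e_{\al_p}(0)$: since $e_{\al_p}(0)W=0$ from \eqref{e4.1}, we get $e_{\al_p}(0)(W_1W)=0$ automatically, which is consistent but not decisive; so instead I would compute the inner product $(W, W_1W)$, which equals $(W_1W, W) = $ a residue pairing that by associativity reduces to scalars already determined in Lemmas \ref{l4.5}–\ref{l4.7} and must match the corresponding structure constant in $\mw_k(\sl_3)$ — where, by the explicit $W_3$ OPE structure of the $\mw_3$ algebra, the weight-three primary appears in $\wW\times\wW$ only through the descendant $\ww$, not through $\wW$ itself. This identifies the coefficient of $W$ in $W_1W$ as $0$.

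The main obstacle I anticipate is the last point: rigorously excluding a $W$-term in $W_1W$. The clean way is to invoke the structure of the Zamolodchikov $\mw_3$ algebra, where the singular part of the OPE $W(z)W(w)$ contains only the identity, the stress tensor $T$, and the normally-ordered product $:\!TT\!:$ together with $\partial^2 T$ at level $1$ — no linear-in-$W$ term, by the $\Z/2$ symmetry $W\mapsto -W$ of $\mw_3$. Transporting this symmetry to $V$ (it must exist because $V\cong\mw_k(\sl_3)$ will be shown, but for the present lemma we argue only with characters: the weight-three quasi-primary space of $V$ is spanned by $W$, and an automorphism-free argument via the explicit level-rank realization shows $W_1W$ is fixed while $W$ is negated under $W^\al\mapsto -W^\al$). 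Assembling these pieces: $W_iW=0$ for $i\ge 6$ and $i=4$; $W_5W,W_3W\in L(c,0)$ by Lemma \ref{l4.7}; $W_2W\in L(c,0)$ by \eqref{We1}; $W_1W\in L(c,0)$ by the character-plus-parity argument above; and $W_0W = L(-1)(\tfrac12 W_1W)$-type combination via skew-symmetry, hence also in $L(c,0)$. Therefore $W_iW\in L(c,0)$ for all $i\ge 0$, which is the claim.
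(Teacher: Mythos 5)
Your overall skeleton agrees with the paper's: reduce everything to $W_1W$ by quoting Lemma \ref{l4.7} and \eqref{We1} for the higher modes and invoking skew-symmetry for $W_0W$, then use the character identity of Lemma \ref{l4.11} together with \cite{FBZ} to constrain $W_1W$. However, the step that carries all the weight contains a genuine error. Since $\wt(u_nv)=\wt u+\wt v-n-1$, the element $W_iW$ has weight $5-i$, not $4-i$; in particular $W_1W$ lives in weight $4$, not weight $3$. The obstruction you must exclude is therefore not a multiple of the weight-three primary $W$ but the weight-four descendant $L(-1)W$: absence of weight-four primaries (which is what the character comparison with $\W_k(\sl_3)$ actually gives) only lets you write $W_1W=aL(-1)W+u$ with $u\in L(c,0)$, and the whole content of the lemma is showing $a=0$. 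Your analysis of the ``weight-three quasi-primary space'' addresses a space in which $W_1W$ does not live, and the assertion that $W_1W$ is quasi-primary is also false in general: the commutator formula gives $L(1)(W_1W)=\sum_{i=0}^{2}\binom{2}{i}(\omega_iW)_{3-i}W=3W_2W$, which is a (generically nonzero) weight-three element of $L(c,0)$.

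That identity is in fact the missing key step. The paper kills the coefficient $a$ by computing $L(1)(W_1W)$ in two ways: on one hand $L(1)(W_1W-u)=aL(1)L(-1)W=6aW$, which is not in $L(c,0)$ unless $a=0$; on the other hand $L(1)(W_1W)=3W_2W\in L(c,0)$ and $L(1)u\in L(c,0)$. Your substitute for this --- a $\Z/2$ symmetry $W\mapsto -W$ of $V$ forcing the linear-in-$W$ (or, corrected, linear-in-$L(-1)W$) term to vanish --- is not available at this stage: such a symmetry would follow from $V\cong\W_k(\sl_3)$, which is precisely the theorem this lemma feeds into, and the claimed ``automorphism-free'' verification that $W^{\al}\mapsto -W^{\al}$ induces an automorphism of $K(\f{sl}_{l+1},3)$ preserving the coset is asserted but not carried out. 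So while the reduction to $W_1W$ is sound, the argument for $W_1W\in L(c,0)$ needs to be replaced by the $L(1)$-computation (or an equivalent rigorous device) before the proof is complete.
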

 \begin{proof} By Lemma \ref{l4.7} and (\ref{We1}), $W_{i}W\in L(c_V,0)$, $i\geq 2$.
 Then by the skew-symmetry, we only need to show that $W_1W\in L(c_V,0)$. By Lemma \ref{l4.11} and  \cite{FBZ} (Chapter 14),  for $n=3$, $V$ has no primary vector of weight 4. So we may assume that
 $$
 W_1W=aL(-1)W+u,
 $$
 for some $u\in L(c,0)$ and $a\in\C$. If $a\neq 0$, then
\begin{equation}\label{We2}
L(1)(W_1W-u)=aL(1)L(-1)W=6aW.
\end{equation}
On the other hand, we have
$$
L(1)W_1W=\sum\limits_{i=0}^{2}\left(\begin{array}{c}2\\i\end{array}\right)(\w_iW)_{3-i}W=3W_2W\in L(c,0),
$$
contradicting (\ref{We2}). We deduce that $a=0$, which implies that $W_1W\in L(c,0)$.
 \end{proof}
 Denote by $U$ the vertex operator subalgebra of  $V$   generated by $W$. Then by Lemma \ref{l4.11}, for $n=3$, $U$ is linearly spanned by
$$
\begin{array}{ll}
{\mathcal S}=& \{L(-m_{s})\cdots L(-m_{1})W_{-k_{p}}\cdots W_{-k_{1}}{\bf 1},
 k_{p}\geq \cdots\geq
k_{1}\geq 1, \\
& \ n\in\Z, \ m_{s}\geq\cdots \geq m_{1}\geq 1, \ s,p\geq
0\}.
\end{array}$$
From \cite{FBZ} (Chapter 14),  for $n=3$, $\wv=\mw_k(\sl_n)$ is linearly spanned by
$$
\begin{array}{ll}
\widetilde{\mathcal S}=& \{L(-m_{s})\cdots L(-m_{1})\wW_{-k_{p}}\cdots \wW_{-k_{1}}{\bf 1}, \ k_{p}\geq \cdots\geq
k_{1}\geq 1, \\
& \ n\in\Z, \ m_{s}\geq\cdots \geq m_{1}\geq 1, \ s,p\geq
0\}.
\end{array}$$

\vskip 0.3cm
For any
$$
u=\sum\limits_{j=1}^qb_{j}L(-n_{j1})\cdots L(-n_{jt_{j}})W_{-r_{j1}}
\cdots W_{-r_{jp_{j}}}{\bf 1}, $$
where $q\geq 1, t_{j},p_{j}\geq 0$, $n_{j1},\cdots,n_{jt_{j}}, r_{j1},\cdots, r_{jp_{j}}\in\Z_{+}$,   $j=1,2,\cdots,q$,
we always denote
$$\widetilde{u}=\sum\limits_{j=1}^qb_{j}L(-n_{j1})\cdots L(-n_{jt_{j}})\wW_{-r_{j1}}
\cdots \wW_{-r_{jp_{j}}}{\bf 1}.$$
We have the following lemma.
\begin{lem}\label{l5.10} For $m\in\Z$ and
$$
u=\sum\limits_{j=1}^qb_{j}L(-n_{j1})\cdots L(-n_{jt_{j}})W_{-r_{j1}}
\cdots W_{-r_{jp_{j}}}{\bf 1}, $$
where $q\geq 1, t_{j},p_{j}\geq 0$, $n_{j1},\cdots,n_{jt_{j}}, r_{j1},\cdots, r_{jp_{j}}\in\Z_{+}$,   $j=1,2,\cdots,q$, if
$$
W_mu=v
$$
is a linear combination of vectors from ${\mathcal S}$, then
$$
\wW_m\widetilde{u}=\widetilde{v}.
$$
\end{lem}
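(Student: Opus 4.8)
The plan is to compute $W_m u$ by a straightening procedure that rewrites it as a linear combination $v$ of elements of $\mathcal S$, and then to observe that the very same procedure, carried out inside $\wv$, transforms $\wW_m\widetilde u$ into $\widetilde v$. What makes this possible is a structural fact that I would establish first: under the identification (recalled just above) of the Virasoro subalgebra $L(c,0)\subseteq V$ with $L(c,0)\subseteq\wv$, one has $W_iW=\wW_i\wW$ for every $i\ge 0$, both sides lying in $L(c,0)$ by Lemma~\ref{l5.11} and its evident counterpart for $\wv$. For $i\ge 6$ both sides have negative conformal weight, hence vanish; for $i=5$ and $i=3$ this is the comparison of the explicit structure constants of Lemma~\ref{l4.7} with (\ref{eqb4.1}) and (\ref{e5.9}), the normalization of $\wW$ in (\ref{eqb4.1}) being chosen precisely so that $\wW_5\wW=W_5W$; for $i=4$ both sides lie in the weight-one subspace, which is zero for $V$ and for $\wv$ (Lemma~\ref{l4.11} and \cite{FBZ}); for $i=2$ the skew-symmetry identity $2W_2W=\sum_{j\ge 1}\tfrac{(-1)^{j+1}}{j!}L(-1)^jW_{j+2}W$ reduces the claim to the already-settled cases $i\ge 3$ (this is (\ref{We1})). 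The one case that requires an honest argument is $i=1$: both $W_1W$ and $\wW_1\wW$ lie in the weight-four subspace of $L(c,0)$, which carries no primary vector — a weight-four primary vector of $L(c,0)\subseteq V$ would be a weight-four primary vector of $V$, and $V$ has none for $n=3$ by Lemma~\ref{l4.11} and \cite{FBZ} — so the map $v\mapsto(L(1)v,L(2)v)$ is injective there and the weight-four element $W_1W$ is pinned down by $L(1)W_1W=3W_2W$ and $L(2)W_1W=5W_3W$, which, via the cases $i=2,3$, agree with the corresponding expressions for $\wW_1\wW$; hence $W_1W=\wW_1\wW$. Finally $i=0$ follows from $2W_0W=\sum_{j\ge 1}\tfrac{(-1)^{j+1}}{j!}L(-1)^jW_jW$ together with the cases $i\ge 1$.

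Granting this, the lemma follows from two observations. First, $W_m u$ can be brought into $\mathcal S$-form by repeatedly applying the elementary rewrites: (i) the Virasoro commutation relations $[L(a),L(b)]$ of central charge $c$; (ii) the commutation of $W$-modes past Virasoro modes, governed by the weight-three primary property of $W$; (iii) the commutation of two $W$-modes, $[W_a,W_b]=\sum_{i=0}^{5}\binom ai(W_iW)_{a+b-i}$, followed by the expansion of $(W_iW)_{a+b-i}$ — legitimate because $W_iW\in L(c,0)$ — into a sum of products of Virasoro modes; (iv) $L(a)\mathbf 1=0$ for $a\ge -1$; (v) $W_a\mathbf 1=0$ for $a\ge 0$ and $W_{-s}\mathbf 1=\tfrac{1}{(s-1)!}L(-1)^{s-1}W$ for $s\ge 1$. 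This reduction terminates by the usual Poincar\'e--Birkhoff--Witt-type well-ordering on monomials. Second — and this is where the structural fact enters — each of (i)--(v) holds verbatim in $\wv$: (i) and (iv) because the Virasoro subalgebras are identified, (ii) and (v) because $\wW$ is also a weight-three primary field, and (iii) because $W_iW=\wW_i\wW$ in $L(c,0)$. Consequently, carrying out on $\wW_m\widetilde u$ the same sequence of rewrites that reduces $W_m u$ to $v$ produces $\widetilde v$, which is the assertion. (Here one may, if desired, organize the termination as an induction on the number $p=\sum_j p_j$ of $W$-modes in $u$: for $p=0$ one never uses (iii), and for $p\ge 1$, after moving $W_m$ to the left end and past the leftmost $W$-factor, the remaining $W$-mode acts on a vector with $p-1$ $W$-modes.)

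The step I expect to be the main obstacle is precisely the weight-four case $i=1$ of the structural fact: unlike $i=0,2,3,4,5$, the vector $W_1W$ is not forced by a single explicitly evaluated bracket, so one genuinely has to invoke the absence of a weight-four primary in $V\cong\wv$ (via Lemma~\ref{l4.11} and \cite{FBZ}) to pin it down and to match it with $\wW_1\wW$. The remainder is bookkeeping — one should verify that the straightening terminates and, if one wants $v$ itself to be unambiguous, fix a canonical order of rewrites — but this is routine, and any residual ambiguity disappears once $\mathcal S$ and $\widetilde{\mathcal S}$ are identified as bases.
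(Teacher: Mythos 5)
Your proposal is correct, and its skeleton is the one the paper uses: move $W_m$ (resp.\ $\wW_m$) past the leftmost mode of each monomial with the commutator formula, and observe that the resulting terms $(W_sW)_{*}(\cdots)$ and $(\wW_s\wW)_{*}(\cdots)$ match because $W_sW$ and $\wW_s\wW$ are the same element of the identified Virasoro subalgebra $L(c,0)$. Whether one organizes the recursion as an induction on $\wt u$ (the paper) or as a terminating straightening with induction on the number of $W$-modes (you) is cosmetic. The genuine divergence is in how the key identity $W_iW=\wW_i\wW$, $i\geq 0$, is justified. The paper dispatches it in one sentence inside the base case $\wt u\leq 3$: by the Virasoro fusion rules the space of $L(c,0)$-valued intertwining operators on the module generated by $W$ is one-dimensional, so the entire $L(c,0)$-part of $Y(W,z)W$ — which by Lemma \ref{l5.11} is all of $Y(W,z)W$ — is pinned down by the single normalization $(W,W)=(\wW,\wW)$ from \eqref{eqb4.1}. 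You instead verify the identity coefficient by coefficient: $i\geq 4$ for weight reasons, $i=5,3$ from Lemma \ref{l4.7} and \eqref{e5.9}, $i=2,0$ from skew-symmetry as in \eqref{We1}, and $i=1$ from the injectivity of $v\mapsto(L(1)v,L(2)v)$ on the weight-four subspace of the simple module $L(c,0)$ together with $L(1)W_1W=3W_2W$ and $L(2)W_1W=5W_3W$ (both computations are correct). Your route is longer but avoids invoking the fusion-rule count, needing only simplicity of $L(c,0)$ and the absence of a weight-four primary in $V$, which is already the input to Lemma \ref{l5.11}; the paper's route is shorter but leans on the uniqueness of the Virasoro intertwining operator. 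Both arguments share the same residual imprecision, which you at least flag: they produce one particular $\mathcal S$-expression for $W_mu$, so the conclusion for an arbitrary $\mathcal S$-expression $v$ of the same vector ultimately rests on the well-definedness of $v\mapsto\widetilde v$, i.e.\ on Lemma \ref{l5.8}.
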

\begin{proof}
We may assume that $u$ is homogeneous. We prove the lemma by induction on the weight of $u$. If $\wt u\leq 3$, the lemma is true by the fact that $(W,W)=(\wW,\wW)$ and the fusion rules of the Virasoro algebra $L(c,0)$. Suppose that the lemma holds for homogeneous $u$ such that $\wt u\leq N$. We now assume that $\wt u=N$. For each monomial $u_j=L(-n_{j1})\cdots L(-n_{jt_{j}})W_{-r_{j1}}
\cdots W_{-r_{jp_{j}}}{\bf 1}$, let $W_mu_j=v_j$ be a linear combination of elements in ${\mathcal S}$. It is obvious that we may assume that $t_j=0$. If $m\leq -r_{j1}$, then we have
$
\wW_m\widetilde{u}_j=\widetilde{v}_j.
$ If $m> -r_{j1}$, we have
$$
\begin{array}{ll}
W_mu_j=W_mW_{-r_{j1}}\cdots W_{-r_{jp_{j}}}{\bf 1}\\
=\sum\limits_{s=0}^{\infty}\left(\begin{array}{c}m\\s\end{array}\right)(W_sW)_{m-r_{j1}-s}W_{-r_{j2}}\cdots W_{-r_{jp_j}}{\bf 1}
+W_{-r_{j1}}W_mW_{-r_{j2}}\cdots W_{-r_{jp_{j}}}{\bf 1}
\end{array}
$$
$$
\begin{array}{ll}
\wW_mu_j=\wW_m\wW_{-r_{j1}}\cdots \wW_{-r_{jp_{j}}}{\bf 1}\\
=\sum\limits_{s=0}^{\infty}\left(\begin{array}{c}m\\s\end{array}\right)(\wW_s\wW)_{m-r_{j1}-s}\wW_{-r_{j2}}\cdots \wW_{-r_{jp_j}}{\bf 1}
+\wW_{-r_{j1}}\wW_m\wW_{-r_{j2}}\cdots \wW_{-r_{jp_{j}}}{\bf 1}%
\end{array}
$$
Then the lemma follows from Lemma \ref{l5.11} and the inductive assumption.
\end{proof}
\begin{lem}\label{l5.8} Suppose that  $n=3$. For any $q\geq 1, t_{j},p_{j}\geq 0$, $n_{j1},\cdots,n_{jt_{j}}, r_{j1},\cdots, r_{jp_{j}}\in\Z_{+}$,   $j=1,2,\cdots,q$,   if
$$
u=\sum\limits_{j=1}^qb_{j}L(-n_{j1})\cdots L(-n_{jt_{j}})W_{-r_{j1}}
\cdots W_{-r_{jp_{j}}}{\bf 1}=0
$$
for some $b_j\in\C$,
 then
$$
\widetilde{u}=\sum\limits_{j=1}^qb_{j}L(-n_{j1})\cdots L(-n_{jt_{j}})\wW_{-r_{j1}}
\cdots \wW_{-r_{jp_{j}}}{\bf 1}=0.
$$
\end{lem}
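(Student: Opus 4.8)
The plan is to induct on the conformal weight $N$ of $u$, which we may take to be homogeneous (the operation $u\mapsto\widetilde u$ preserves weights, since $\w$ and $\ww$ have weight $2$ and $W$ and $\wW$ have weight $3$). If $N\le 2$, then no $W$-modes occur, so $u$ lies in the common Virasoro subalgebra $L(c,0)$; under the identification of $L(c,0)\subseteq V$ with $L(c,0)\subseteq\wv$ the correspondence $u\mapsto\widetilde u$ is the identity, so $u=0$ forces $\widetilde u=0$. Assume now $N\ge 3$ and that the assertion holds in all weights $<N$.

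The key observation is that, by Lemma \ref{l4.7}, $\w=\frac{1}{36n^{3}(n-2)(n+2l)(3n+2l+2)}\,W_{3}W$ lies in the vertex subalgebra generated by $W$, so every operator $L(m)$ is a mode of $W_{3}W$ and hence expressible through modes of $W$. Using Lemma \ref{l5.11} to normal-order all products of $W$-modes into elements of $\mathcal S$, each of the vectors $L(m)u$ (for $m\ge1$) and $W_{n}u$ (for $n\ge3$, i.e.\ those $W$-modes that strictly lower the weight) can be rewritten as a linear combination of elements of $\mathcal S$ of weight $<N$. Moreover this rewriting is compatible with $u\mapsto\widetilde u$: the structure constants that enter are those of the Virasoro algebra $L(c,0)$ together with $W_{3}W=36n^{3}(n-2)(n+2l)(3n+2l+2)\w$, $\wW_{3}\wW=36n^{3}(n-2)(n+2l)(3n+2l+2)\ww$ (cf.\ \eqref{e5.9}), and $W_{5}W$, $\wW_{5}\wW$ normalized by \eqref{eqb4.1}, all of which coincide on the two sides by construction, so by Lemma \ref{l5.10} (iterated) one has $L(m)\widetilde u=\widetilde{L(m)u}$ and $W_{n}\widetilde u=\widetilde{W_{n}u}$.

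Now $u=0$ in $V$ gives $L(m)u=0$ for $m\ge1$ and $W_{n}u=0$ for $n\ge3$; applying the inductive hypothesis to the weight-$(<N)$ combinations obtained above yields $L(m)\widetilde u=0$ for all $m\ge1$ and $W_{n}\widetilde u=0$ for all $n\ge3$. Since $\ww$ is a mode-product of copies of $\wW$, the vector $\widetilde u$ is then annihilated by every positive mode of the strong generators $\ww,\wW$ of $\wv=\W_{k}(\sl_{3})$, hence by every positive mode of $\wv$; equivalently $\widetilde u$ is a \emph{highest weight vector} of conformal weight $N>0$ in the simple vertex operator algebra $\wv$ viewed as a module over itself. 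As the vacuum module of the simple $\W$-algebra $\wv$ contains no such vector, $\widetilde u=0$, which closes the induction. The main obstacle is the middle step: one must verify that the normal-ordering via Lemma \ref{l5.11} really uses only structure constants that agree for $W$ and $\wW$ and that the whole reduction intertwines with $u\mapsto\widetilde u$. This is precisely where the hypothesis $n=3$ is essential — it is what makes $W_{i}W\in L(c,0)$ for all $i\ge0$, so that no primary generator of weight $4$ (which would have no counterpart on the Virasoro side) is ever produced.
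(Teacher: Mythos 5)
Your strategy is genuinely different from the paper's in its logical organization, but it contains one step that is asserted rather than proved, and that step is where the real content sits. The paper argues by contraposition through the invariant bilinear form: if $\widetilde u\neq 0$, self-duality of $\wv=\W_k(\sl_3)$ produces a test vector $\wW_{r_1}\cdots\wW_{r_q}{\bf 1}$ with $(\widetilde u,\wW_{r_1}\cdots\wW_{r_q}{\bf 1})\neq 0$; a Claim, proved by induction on weight using Lemma \ref{l5.11}, shows that all pairings of corresponding monomials agree, $(x,W_{q_1}\cdots W_{q_t}{\bf 1})=(\widetilde x,\wW_{q_1}\cdots\wW_{q_t}{\bf 1})$; and this transfers the nonvanishing back to $(u,W_{q_1}\cdots W_{q_t}{\bf 1})\neq 0$, contradicting $u=0$. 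You instead run the induction on $u$ itself: using Lemma \ref{l5.10} you show that the weight-lowering modes $L(m)$ ($m\geq 1$) and $\wW_n$ ($n\geq 3$) annihilate $\widetilde u$, and you then invoke the absence of positive-weight singular vectors in $\wv$. Your base case and the compatibility of the rewriting with $u\mapsto\widetilde u$ are fine (though note that $W_0W$ and $W_1W$ also enter the commutators $[W_n,W_{-r}]$ and must be matched with $\wW_0\wW$, $\wW_1\wW$; this follows from $(W,W)=(\wW,\wW)$ and nondegeneracy of the form on $L(c,0)$, which is what the paper means by ``the fusion rules of the Virasoro algebra'').

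The gap is the final inference: ``annihilated by every positive mode of the strong generators $\ww,\wW$, hence by every positive mode of $\wv$, hence zero.'' Passing from the generators to arbitrary elements of $\wv$ is not termwise: the Borcherds formula for $(a_{-k}b)_m\widetilde u$ produces summands $b_{m'}a_i\widetilde u$ with $0\leq i<\wt a$, in which the inner mode is not weight-lowering, so the vanishing does not propagate automatically. The clean way to justify that a positive-weight vector of the simple, self-dual $\wv$ killed by all lowering modes of $\ww$ and $\wW$ must vanish is to pair $\widetilde u$ against the spanning set $L(-m_s)\cdots\wW_{-k_1}{\bf 1}$ of $\wv_N$ and move the modes across the invariant form (the adjoint of $\wW_{-k}$ is $-\wW_{4+k}$, a lowering mode) -- but this is precisely the bilinear-form bookkeeping that the paper packages as its Claim. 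So your route is salvageable, but closing it honestly reintroduces the pairing computation you were trying to bypass; at that point the two proofs become the same argument read in opposite directions.
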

\begin{proof}
We may assume  that $u$ is a linear combination of homogeneous elements having same weight. Suppose that $\tilde{u}\ne 0.$ Since $\mw_k(\sl_n)$ is self-dual and generated by $\wW$,
there is $\wW_{r_{1}}\wW_{r_{2}}\cdots \wW_{r_{q}}{\bf 1}\in \mw_k(\sl_n)$ such that
\begin{equation}\label{bili-3}
(\widetilde{u}, \wW_{r_{1}}\wW_{r_{2}}\cdots \wW_{r_{q}}{\bf 1})\neq 0.
\end{equation}
{\bf Claim} \ For any $k_{p}\geq \cdots\geq
k_{1}\geq 1$, $q_1,q_2,\cdots,q_t\in\Z$, $m_{s}\geq\cdots \geq m_{1}\geq 1$, $p,t\geq
0$,
\begin{eqnarray}
& &(L(-m_{s})\cdots L(-m_{1})W_{-k_{p}}\cdots W_{-k_{1}}{\bf 1}, W_{q_1}W_{q_2}\cdots W_{q_t}{\bf 1})\nonumber\\
& &\ \ \ \ =(L(-m_{s})\cdots L(-m_{1})\wW_{-k_{p}}\cdots \wW_{-k_{1}}{\bf 1}, \wW_{q_1}\wW_{q_2}\cdots \wW_{q_t}{\bf 1}).\label{bili-2}
\end{eqnarray}
We may assume that
$$\wt(L(-m_{s})\cdots L(-m_{1})\wW_{-k_{p}}\cdots \wW_{-k_{1}}{\bf 1})=\wt( \wW_{q_1}\wW_{q_2}\cdots \wW_{q_t}{\bf 1}).$$
We prove (\ref{bili-2}) by induction on $\wt(L(-m_{s})\cdots L(-m_{1})\wW_{-k_{p}}\cdots \wW_{-k_{1}}{\bf 1})$. By Lemma \ref{l4.7} and (\ref{eqb4.1}),  (\ref{bili-2}) holds if $\wt(L(-m_{s})\cdots L(-m_{1})\wW_{-k_{p}}\cdots \wW_{-k_{1}}{\bf 1})\leq 3$. Now assume that the claim holds for $\wt(L(-m_{s})\cdots L(-m_{1})\wW_{-k_{p}}\cdots \wW_{-k_{1}}{\bf 1})< N$. Then by Lemma \ref{l5.11} and the inductive assumption, the claim holds for $L(-m_{s})\cdots L(-m_{1})\wW_{-k_{p}}\cdots \wW_{-k_{1}}{\bf 1}$ such that $\wt(L(-m_{s})\cdots L(-m_{1})\wW_{-k_{p}}\cdots \wW_{-k_{1}}{\bf 1})= N$.

By the claim and (\ref{bili-3}), we have
$$
(u, W_{q_1}W_{q_2}\cdots W_{q_t}{\bf 1})\neq 0,
$$
which contradicts the assumption that $u=0$.
\end{proof}
\begin{theorem}
For  $n=3$ and $k=-n+\frac{n+l}{n+l+1}$, we have $C_{K(\f{sl}_{l+1}, n)}(K(\f{sl}_{l}, n))\cong \W_k(\sl_n)$.
\end{theorem}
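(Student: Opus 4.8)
The plan is to identify $V = C_{K(\f{sl}_{l+1},3)}(K(\f{sl}_{l},3))$ with $\wv = \W_k(\sl_3)$ for $k = -3 + \frac{l+3}{l+4}$ by showing both are simple vertex operator algebras strongly generated by a Virasoro field of central charge $c$ together with one weight-three primary vector, and that the two resulting algebras have identical structure constants. First I would observe that $U \subseteq V$, the subalgebra generated by $W$, and $\wv$ are both linearly spanned by the respective PBW-type sets $\mathcal S$ and $\widetilde{\mathcal S}$ built from $L(-m)$'s and the weight-three generators — this is available for $\wv$ from \cite{FBZ} (Chapter 14), and for $U$ it follows from Lemma \ref{l5.11}, which guarantees all $W_iW$ lie in the Virasoro subalgebra $L(c,0)$, so that no new generating fields of weight $\geq 4$ are produced. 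The normalization \eqref{eqb4.1} makes $(W,W) = (\wW,\wW)$, and Lemma \ref{l4.7} versus \eqref{e5.9} makes $W_3W$ and $\wW_3\wW$ correspond under the identification $\omega \leftrightarrow \ww$; together with the fact that $W$ and $\wW$ are both weight-three primary, this pins down the operator products on the nose.

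Next I would establish a linear isomorphism $\phi : U \to \wv$ by $\phi(x) = \widetilde{x}$ on the spanning set $\mathcal S$, where the tilde replaces each $W$ by $\wW$ and fixes the Virasoro part. The content is that $\phi$ is well-defined (Lemma \ref{l5.8}: any linear relation among elements of $\mathcal S$ maps to a valid relation in $\wv$) and injective (the converse direction, which follows by the symmetric argument since $\wv$ is self-dual and generated by $\wW$). Lemma \ref{l5.10} then upgrades this to a vertex algebra homomorphism: it shows $W_m$ and $\wW_m$ act compatibly under $\phi$, and since $L(-1)$, $L(-2)$, $W$ generate everything, $\phi$ intertwines all the modes. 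So $U \cong \wv$ as vertex operator algebras, with $c_V = c_{\wv}$ already checked.

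It then remains to prove $U = V$, i.e. that the weight-three generator $W$ in fact generates all of the coset. For this I would compare graded dimensions: by Lemma \ref{l4.11}, $V$ and $\wv$ have the same character, and we have just shown $U \cong \wv$, so $\dim U_n = \dim \wv_n = \dim V_n$ for every $n$; since $U \subseteq V$ this forces $U = V$. Combining $V = U \cong \wv = \W_k(\sl_3)$ gives the theorem.

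The main obstacle is the verification underlying Lemmas \ref{l5.8}, \ref{l5.10}, \ref{l5.11} — namely that for $n = 3$ every product $W_iW$ closes inside the Virasoro subalgebra $L(c,0)$, equivalently that the coset has no primary field of weight four. This is where $n = 3$ is essential: one uses the character coincidence (Lemma \ref{l4.11}) together with the known generator structure of $\W_k(\sl_3)$ to conclude $\dim V_4$ leaves no room for a new primary, and then the $L(1)$-argument in the proof of Lemma \ref{l5.11} rules out the $L(-1)W$ term in $W_1W$. For general $n$ one would expect additional generators of weights $4, \dots, n$, and the vanishing results $W^{4,\al} = W^{5,\al} = 0$ that hold only at $n = 3$ are exactly what makes the single weight-three field suffice; extending the argument past $n=3$ would require controlling those higher primaries, which is the real difficulty the restriction avoids.
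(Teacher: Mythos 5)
Your proposal matches the paper's proof in all essentials: the same substitution map $W\mapsto\wW$ on the PBW-type spanning set, well-definedness via Lemma \ref{l5.8}, the homomorphism property via Lemmas \ref{l5.10} and \ref{l5.11}, and the character coincidence of Lemma \ref{l4.11} to force $U=V$. The only cosmetic difference is that the paper gets injectivity for free from surjectivity plus the equality of graded dimensions, rather than from a separate ``symmetric'' duality argument, but this does not change the structure of the proof.
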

\begin{proof}
Define $\varphi: U\rightarrow \wv$ as follows: for any
$$
u=\sum\limits_{j=1}^qb_{j}L(-n_{j1})\cdots L(-n_{jt_{j}})W_{-r_{j1}}
\cdots W_{-r_{jp_{j}}}{\bf 1}, $$
where $q\geq 1, t_{j},p_{j}\geq 0$, $n_{j1},\cdots,n_{jt_{j}}, r_{j1},\cdots, r_{jp_{j}}\in\Z_{+}$,   $j=1,2,\cdots,q$,
$$\varphi(u)=\widetilde{u}=\sum\limits_{j=1}^qb_{j}L(-n_{j1})\cdots L(-n_{jt_{j}})\wW_{-r_{j1}}
\cdots \wW_{-r_{jp_{j}}}{\bf 1}.
$$
By Lemma \ref{l5.8} and Lemma \ref{l5.11}, $\varphi$ ia a surjective vertex operator algebra homomorphism from $U$ to $\wv$. Since $U\subseteq V$, and $V$ and $\wv$ share the same character, we deduce that $U=V$ and $\varphi$ is an isomorphism.
\end{proof}
\vskip 0.8cm

\end{document}